 \newtheorem{thm}{Theorem}[section]
 \newtheorem{prop}[thm]{Proposition}
 \theoremstyle{definition}
 \theoremstyle{remark}
\newcommand{\Real}{\mathbb{R}}
\newcommand{\dd}[1]{\frac{d}{d#1}}
\newcommand{\ddtz}{\frac{d}{dt}\Big |_{t=0}}
\newcommand{\ver}{\mathcal V}
\newcommand{\hor}{\mathcal H}
\newcommand{\hess}{\text{Hess}}
\newcommand{\RC}{\mathfrak R}
\newcommand{\marginnote}[1]
{
}
\begin{document}

\title[A Remark on the Potentials of Optimal Transport Maps]
{A Remark on the Potentials of Optimal Transport Maps}

\author{Paul W.Y. Lee}
\email{plee@math.berkeley.edu}
\address{Department of Mathematics, University of California at Berkeley, 970 Evans Hall \#3840 Berkeley, CA 94720-3840 USA}

\thanks{The author was supported by the NSERC postdoctoral fellowship.}

\begin{abstract}
Optimal maps, solutions to the optimal transportation problems, are completely determined by the corresponding $c$-convex potential functions. In this paper, we give simple sufficient conditions for a smooth function to be $c$-convex when the cost is given by minimizing a Lagrangian action. 
\end{abstract}

\maketitle


\section{Introduction}

The theory of optimal transportation starts from the problem of moving one mass to another in the most efficient way. Mathematically, the masses are given by two Borel probability measures $\mu$ and $\nu$ on a manifold $M$. The efficiency is measured by a cost function $c:M\times M\longrightarrow\Real$ and the problem is to find a Borel map which minimizes the following total cost:
\begin{equation}\label{Monge}
\int_Mc(x,\varphi(x))d\mu(x)
\end{equation}
among all Borel maps $\varphi:M\longrightarrow M$ which push $\mu$ forward to $\nu$. Here the push forward $\varphi_{*}\mu$ of a measure $\mu$ by a Borel map $\varphi$ is a measure defined by $\varphi_{*}\mu(U)=\mu(\varphi^{-1}(U))$ for all Borel sets $U\subseteq M$.

When the transportation cost $c$ is given by minimizing a Lagrangian action, the existence and uniqueness of solution to the above problem is known. More precisely, assume that $M$ is a compact connected manifold with no boundary and let $L:TM\to\Real$ be a smooth function, called Lagrangian, which satisfies the following: 
\begin{itemize}
\item the second derivative $\frac{\partial^2 L}{\partial v^2}$ is positive definite, 
\item $L$ is superlinear (ie. $\lim_{|v|\to\infty}\frac{L(x,v)}{|v|}=\infty$ and $|\cdot|$ denotes the norm of a Riemannian metric). 
\end{itemize}

Let $c$ be the cost function defined by 
\begin{equation}\label{cost}
c(x,y)=\inf\int_0^1L(\gamma(t),\dot\gamma(t))dt,
\end{equation}
where the infimum is taken over all smooth curves $\gamma(\cdot)$ connecting $x$ and $y$ (ie. $\gamma(0)=x$ and $\gamma(1)=y$). 

Assuming that the first measure $\mu$ is absolutely continuous with respect to the Lebesgue measure, Bernard-Buffoni (generalizing the earlier works of Brenier \cite{Br} in the Euclidean case and McCann \cite{Mc} in the Riemannian case) proved the existence and uniqueness of solution to the above optimal transportation problem (see also \cite{FaFi,AgLe,FiRi} for various extensions of this result and \cite{Vi1,Vi2} for a detail introduction to the theory of optimal transportation problem). 

In order to state the precise result, we need a few definitions. First let us consider the smooth function $H:T^*M\to\Real$, called Hamiltonian, defined on the cotangent bundle $T^*M$ by the Legendre transform of the Lagrangian $L$: 
\[
H(x,p)=\sup_{v\in T_xM}[p(v)-L(x,v)]. 
\] 

Let $\vec H$ be the Hamiltonian vector field on $T^*M$ defined in local coordinates $(x_1,...,x_n,p_1,...,p_n)$ by 
\[
\vec H=\sum_{i=1}^n\left(\frac{\partial H}{\partial p_i}\partial_{x_i}-\frac{\partial H}{\partial x_i}\partial_{p_i}\right)
\]
and let $\phi_t:T^*M\to T^*M$ be the flow of the Hamiltonian vector field $\vec H$. 

Another notion that we need is $c$-convexity of a function. A function $f:M\to\Real$ is $c$-convex if there is a function $\bar f:M\to\Real$ such that 
\[
f(x)=\sup_{y\in M}[\bar f(y)-c(x,y)]. 
\]
It is known that any $c$-convex function is Lipschitz (in fact locally semi-convex) and hence differentiable almost everywhere. 

\

\begin{thm}\cite{Br,Mc,BeBu}\label{BeBu}
Assume that the measure $\mu$ is absolutely continuous with respect to the Lebesgue measure and the manifold $M$ is compact. Then the optimal transportation problem corresponding to the transportation cost (\ref{cost}) has a unique solution $\varphi:M\to M$. Moreover, there exists a $c$-convex function $f:M\to\Real$ such that the solution $\varphi$ is given by 
\[
\varphi(x)=\pi(\phi_1(df_x)), 
\]
where $\pi:T^*M\to\Real$ is the canonical projection. 
\end{thm}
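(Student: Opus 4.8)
The plan is to follow the classical route: relax the Monge problem \eqref{Monge} to the Kantorovich transport problem, use Kantorovich duality to produce a $c$-convex potential, and then read the optimal map off that potential by means of the Tonelli theory of Lagrangian minimizers, which is precisely what turns ``$df_x$'' into ``$\pi(\phi_1(df_x))$''.

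First I would record what the hypotheses on $L$ give at the level of the cost. Since $\frac{\partial^2 L}{\partial v^2}>0$ and $L$ is superlinear, Tonelli's theorem guarantees that for every $x,y$ the infimum in \eqref{cost} is attained by a curve solving the Euler--Lagrange equation; together with compactness of $M$ this yields that $c:M\times M\to\Real$ is finite, continuous, locally Lipschitz and locally semiconcave, and that the Euler--Lagrange (equivalently Hamiltonian) flow $\phi_t$ is complete. I would then pass to the Kantorovich problem of minimizing $\eta\mapsto\int_{M\times M}c\,d\eta$ over the set $\Pi(\mu,\nu)$ of Borel probability measures on $M\times M$ with marginals $\mu$ and $\nu$: this set is nonempty and weak-$*$ compact because $M$ is compact, and the functional is weak-$*$ continuous because $c$ is continuous, so an optimal plan $\eta_0$ exists. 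Kantorovich duality then produces a $c$-convex function $f:M\to\Real$, with conjugate $\bar f(y)=\inf_x[f(x)+c(x,y)]$, such that $\bar f(y)-f(x)\le c(x,y)$ for all $x,y$ and $f(x)+c(x,y)=\bar f(y)$ for every $(x,y)$ in the support of $\eta_0$.

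The heart of the matter is converting this into the stated formula. Because $f$ is $c$-convex and $c$ is locally semiconcave, $f$ is locally semiconvex, hence differentiable off a Lebesgue-null set; since $\mu$ is absolutely continuous, $f$ is differentiable $\mu$-a.e. Fix such a point $x$ and a point $y$ with $(x,y)\in\operatorname{supp}\eta_0$. Then $z\mapsto f(z)+c(z,y)$ attains its minimum at $z=x$; combining the first-order behaviour of the semiconvex function $f$ at its point of differentiability $x$ with the semiconcavity of $c(\cdot,y)$ forces $c(\cdot,y)$ to be differentiable at $x$ with $\partial_x c(x,y)=-df_x$, and in particular forces the minimizing curve $\gamma$ from $x$ to $y$ to be unique. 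The classical first-variation identity for the action then says $\partial_x c(x,y)$ equals minus the initial momentum of $\gamma$ under the Legendre transform, so that initial covector is exactly $df_x$; since an Euler--Lagrange trajectory is determined by its initial position and momentum, $t\mapsto\phi_t(x,df_x)$ projects to $\gamma$, whence $y=\gamma(1)=\pi(\phi_1(df_x))$. Hence, setting $\varphi(x):=\pi(\phi_1(df_x))$ on the $\mu$-full-measure set of differentiability points, $\eta_0$ is concentrated on the graph of $\varphi$; therefore $\eta_0=(\mathrm{id}\times\varphi)_*\mu$, so $\varphi_*\mu=\nu$ and $\varphi$ attains the optimal Kantorovich cost, which shows that $\varphi$ solves the Monge problem and has the asserted form.

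For uniqueness I would observe that if $\varphi_0$ and $\varphi_1$ were two optimal maps, then $\tfrac12(\mathrm{id}\times\varphi_0)_*\mu+\tfrac12(\mathrm{id}\times\varphi_1)_*\mu$ is again an optimal plan, hence by the previous paragraph concentrated on the graph of a single map; since then both $(x,\varphi_0(x))$ and $(x,\varphi_1(x))$ lie on that graph for $\mu$-a.e.\ $x$, we get $\varphi_0=\varphi_1$ $\mu$-a.e. The step I expect to be the genuine obstacle is the middle of the third paragraph: showing that wherever $f$ is differentiable the cost $c(\cdot,y)$ is differentiable too, with $\partial_x c(x,y)=-df_x$, that the action-minimizing curve is then unique, and that its initial momentum is governed by the first-variation formula. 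Each of these rests on the Tonelli hypotheses on $L$, but establishing them calls for a careful treatment of the semiconcavity of $c$, of superdifferentials of semiconcave functions, and of the smooth structure of minimizing curves; by comparison, the relaxation and duality steps are routine.
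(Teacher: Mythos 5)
The paper does not prove this theorem: it is quoted from the cited references (Brenier, McCann, Bernard--Buffoni), so there is no in-paper argument to compare against. Your outline is the standard proof from those sources and is correct as a sketch: relaxation to the Kantorovich problem, existence of an optimal plan by weak-$*$ compactness, duality producing a $c$-convex potential $f$ with $f(x)+c(x,y)=\bar f(y)$ on the support of the optimal plan, differentiability of $f$ $\mu$-a.e.\ by local semiconvexity and absolute continuity of $\mu$, and then the Tonelli first-variation identity turning the contact condition into $df_x=\partial_vL(x,\dot\gamma(0))$, i.e.\ $y=\pi(\phi_1(df_x))$; the midpoint-plan trick gives uniqueness. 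You have also correctly located the genuinely technical step, namely that at a differentiability point of $f$ lying under the optimal plan the function $c(\cdot,y)$ is forced to be differentiable with a singleton superdifferential, whence the minimizer from $x$ to $y$ is unique and determined by its initial momentum; filling that in requires the local semiconcavity of the time-one action of a Tonelli Lagrangian on a compact manifold and the superdifferential calculus for semiconcave functions, exactly as carried out in Bernard--Buffoni and Fathi--Figalli. No gaps beyond the ones you have explicitly flagged as requiring the Tonelli machinery.
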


\

The unique solution $\varphi$ in Theorem \ref{BeBu} is called the optimal map.  One natural question after Theorem \ref{BeBu} would be the following: Is there any simple condition which guarantees a given function $f:M\to\Real$ to be  $c$-convex or whether the corresponding map $\varphi(x):=\pi(\phi_1(df_x))$ is an optimal map?

In this paper, we give a solution to the above problem using curvature type invariants introduced in \cite{AgGa} (see also Section \ref{HamCur} and Theorem \ref{main} of this paper for more detail). We state a simple consequence of the main result (Theorem \ref{main}) when the Lagrangian is natural mechanical. More precisely, let $\left<\cdot,\cdot\right>$ be a Riemannian metric on a compact manifold $M$ and let $U:M\to\Real$ be a smooth function, called potential. The next theorem is a version of Theorem \ref{main} specialized to the Lagrangians of the form $L(x,v)=\frac{1}{2}|v|^2-U(x)$, called natural mechanical Lagrangians. 

\

\begin{thm}\label{natural}
Assume that the Lagrangian $L$ is of the form $L(x,v)=\frac{1}{2}|v|^2-U(x)$, the sectional curvature is non-positive,  and the Hessian of the potential $U:M\to\Real$ satisfies  $\hess\,U\leq kI$ for some constant $k$. Let $f$ be a $C^2$ function which satisfies 
\[
\hess\,f>\begin{cases}
-\sqrt{|k|}\coth(\sqrt{|k|})I & \text{if } k<0 \\
-I & \text{if } k=0 \\
-\sqrt{|k|}\cot(\sqrt{|k|})I & \text{if } k>0 . 
\end{cases}
\]
Then $f$ is $c$-convex and the map $\varphi(x):=\pi(\phi_1(df_x))$ is the unique optimal map pushing any Borel probability measure $\mu$ forward to $\varphi_*\mu$. 
\end{thm}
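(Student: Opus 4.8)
The plan is to derive Theorem~\ref{natural} as a special case of the main result, Theorem~\ref{main}, whose hypothesis is presumably a lower bound on $\hess f$ in terms of a certain matrix Riccati solution built out of the Hamiltonian curvature of $L$. So the real content is a curvature computation for the natural mechanical Lagrangian $L(x,v)=\tfrac12|v|^2-U(x)$, followed by a comparison argument for the associated Riccati equation.

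First I would recall the structure behind Theorem~\ref{main}: the map $\varphi(x)=\pi(\phi_1(df_x))$ is optimal as soon as the ``cost-exponential'' map $y\mapsto \exp^c_x(y)$ is a diffeomorphism onto a neighborhood capturing the whole dynamics and, crucially, the Lagrangian flow started from the graph of $df$ develops no conjugate points on $[0,1]$ and the graph stays a ``$c$-convex'' Lagrangian section. The condition from \cite{AgGa} controlling conjugate points along a Hamiltonian extremal is a matrix Riccati equation $\dot S + S^2 + R(t) = 0$, where $R(t)$ is the Hamiltonian curvature operator along the extremal; the extremal exists on $[0,1]$ with the section remaining a graph iff this Riccati equation, started from $S(0)=\hess f$ (in the appropriate frame), has a solution on all of $[0,1]$. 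Thus Theorem~\ref{main} should say: $f$ is $c$-convex (and $\varphi$ optimal) provided $\hess f$ exceeds the ``most negative'' initial condition for which the Riccati flow survives until time $1$, uniformly over all extremals.

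The key step is then to compute $R(t)$ for $L=\tfrac12|v|^2-U$. For this Lagrangian the Hamiltonian is $H(x,p)=\tfrac12|p|^2+U(x)$, the Hamiltonian flow is the flow of $-\nabla U$ seen on $T^*M$, and the Hamiltonian curvature decomposes as the Riemannian sectional curvature term plus a Hessian-of-potential term: schematically $R(t) = \mathrm{Sec} + \hess U$ evaluated along the extremal (this is the classical Jacobi/stability operator $\ddot J + R(\dot\gamma,J)\dot\gamma + (\hess U)J=0$ for natural mechanical systems). Under the hypotheses $\mathrm{Sec}\le 0$ and $\hess U\le kI$ we get the scalar upper bound $R(t)\le kI$ as operators. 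Comparison for matrix Riccati equations (monotone dependence on the curvature term) then reduces everything to the scalar ODE $\dot s + s^2 + k = 0$: the worst case is $R\equiv kI$. Solving this scalar equation backward from time $1$, the smallest $s(0)$ for which the solution does not blow up on $[0,1]$ is exactly $-\sqrt{|k|}\coth\sqrt{|k|}$ when $k<0$, $-1$ when $k=0$, and $-\sqrt{|k|}\cot\sqrt{|k|}$ when $k>0$ (these are precisely $s(0)=\lim_{s(1)\to+\infty}$ of the explicit solutions $s(t)=\sqrt{k}\tan(c-\sqrt k t)$ etc.). Hence $\hess f$ strictly above this threshold guarantees the Riccati solution exists on $[0,1]$, no conjugate points occur, the section stays a graph, and Theorem~\ref{main} applies.

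The main obstacle I expect is not the scalar ODE bookkeeping but making the matrix comparison rigorous: one must check that the relevant Riccati comparison principle genuinely applies, i.e. that $R(t)\le kI$ as symmetric operators implies the solution of the matrix equation with $S(0)\ge s(0)I$ survives on $[0,1]$ and stays above the scalar solution, and that the frame in which $\hess f$ enters as the initial condition is the same (parallel/horizontal) frame in which the curvature operator is expressed --- for a natural mechanical Lagrangian this identification is clean because the vertical subspace is canonically $T^*_xM$ and the metric splitting is the Riemannian one, but it should be spelled out. A secondary point is verifying the non-conjugacy condition is not only necessary but, together with the graph property on $[0,1]$, sufficient for $c$-convexity in the sense required by Theorem~\ref{BeBu}; this is where one invokes Theorem~\ref{main} as a black box, so the burden is just to confirm its hypotheses are met, namely the uniform-in-$x$ lower bound on $\hess f$ produced above. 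Once the curvature formula $R(t)=\mathrm{Sec}(\cdot\wedge\dot\gamma)+\hess U$ is in hand, the rest is the comparison argument and reading off the three cases from $\dot s+s^2+k=0$.
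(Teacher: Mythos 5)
Your proposal is correct and follows essentially the same route as the paper: it reduces Theorem~\ref{natural} to Theorem~\ref{main} by (i) identifying the Hamiltonian curvature of the natural mechanical system as Riemannian curvature plus $\hess U$ (Proposition~\ref{curvaturecharN}), hence bounded above by $kI$ under the hypotheses, (ii) checking that the canonical frame and the $H$-Hessian reduce to the orthonormal frame and the Riemannian Hessian (Propositions~\ref{Rframe} and~\ref{hess}), and (iii) invoking the Riccati comparison with $\dot s+s^2+k=0$ already built into Theorem~\ref{main}. The points you flag as needing care are exactly the supporting propositions the paper supplies, so nothing is missing.
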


\

In the Riemannian case where the potential $U\equiv 0$, Theorem \ref{main} can be improved using homogeneity of the corresponding Hamiltonian. Let $v_1=\frac{\nabla f(x)}{|\nabla f(x)|},v_2...,v_n$ be an orthonormal basis of a tangent space $T_xM$. We let $\mathcal S$ be the Hessian matrix of $f$ with respect to this basis: 
\[
\hess\,f(v_i)=\sum_{j=1}^n\mathcal S_{ij} v_j. 
\]

\

\begin{thm}\label{Riemannian}
Assume that the Lagrangian $L$ is of the form $L(x,v)=\frac{1}{2}|v|^2$ and the sectional curvature $K\leq k$ for some constant $k$. Let $f$ be a $C^2$ function and let $\lambda:=\sqrt{|k|}|\nabla f|$. Assume that $f$ satisfies 
\[
\mathcal S>
\begin{cases}
\left(\begin{array}{cc}
           -1 & 0 \\
           0 & -\lambda\coth(\lambda)I \\
         \end{array}
       \right) & k<0
\\ -I & k=0
\\\left(\begin{array}{cc}
           -1 & 0 \\
           0 & -\lambda\cot(\lambda)I \\
         \end{array}
       \right) & k>0
\end{cases}
\]
Then $f$ is $c$-convex and the map $\varphi(x):=\pi(\phi_1(df_x))$ is the unique optimal map pushing any Borel probability measure $\mu$ forward to $\varphi_*\mu$. 
\end{thm}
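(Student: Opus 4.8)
The plan is to deduce Theorem \ref{Riemannian} from the general curvature criterion (Theorem \ref{main}) by exploiting the special structure of the geodesic Hamiltonian $H(x,p)=\frac12|p|^2$. The overarching strategy is the standard one in this circle of ideas: a smooth $f$ is $c$-convex precisely when the Lagrangian submanifold $\Lambda=\{df_x:x\in M\}$ of $T^*M$ remains a graph over $M$ under the Hamiltonian flow for all times $t\in[0,1]$, i.e. $\pi\circ\phi_t$ restricted to $\Lambda$ is a diffeomorphism onto $M$ for each such $t$; once this holds, the explicit $c$-convexity (and optimality for \emph{any} $\mu$) follows from the usual verification that $y\mapsto \bar f(y)-c(x,y)$ is maximized at $y=\varphi(x)$, which in turn is reduced by Theorem \ref{main} to a matrix Riccati / Jacobi-type comparison along the Hamiltonian trajectories. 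So the real content is: translate the hypothesis $\mathcal S>(\cdots)$ into the statement that the relevant Jacobi curve (the image of the vertical distribution under $d\phi_t$) never hits the vertical subspace for $t\le1$.

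First I would set up the matrix Riccati equation governing the evolution of $\hess$ of the generating function along the flow, using the Hamiltonian curvature formalism of \cite{AgGa} recalled in Section \ref{HamCur}: along a trajectory $\phi_t(df_x)$ the relevant symmetric operator $S(t)$ satisfies $\dot S + S^2 + R(t)=0$ with $S(0)$ determined by $\hess f$ and $R(t)$ the Hamiltonian curvature operator, and $f$ is $c$-convex as soon as $S(t)$ stays finite (no blow-up) on $[0,1]$. For the geodesic Hamiltonian there are two simplifications. The curvature operator $R(t)$ is, up to the usual identifications, the Riemannian sectional curvature operator along the geodesic $t\mapsto\pi(\phi_t(df_x))$ traversed with speed $|\nabla f(x)|$, so $K\le k$ gives $R(t)\le |\nabla f(x)|^2\,k\cdot I$ in the appropriate sense — this rescaling by $|\nabla f|^2$ is exactly where the factor $\lambda=\sqrt{|k|}\,|\nabla f|$ enters. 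Second, homogeneity of $H$ of degree $2$ in $p$ means the direction $p/|p|=\nabla f/|\nabla f|$ (the velocity direction $v_1$) is special: the flow is just reparametrized geodesic flow, the curvature in the $v_1$-direction vanishes (radial curvature $R(v_1,\dot\gamma)\dot\gamma=0$), and along that single direction the Riccati comparison degrades to the flat one-dimensional equation $\dot s+s^2=0$ whose solution from $s(0)=\mathcal S_{11}$ avoids blow-up on $[0,1]$ exactly when $\mathcal S_{11}>-1$. This is why the comparison matrix in Theorem \ref{Riemannian} is block-diagonal: a scalar $-1$ in the $v_1$ slot and $-\lambda\coth\lambda$ (resp. $-1$, $-\lambda\cot\lambda$) on the orthogonal complement, matching the $t=1$ values of the solutions of $\dot u+u^2 - \lambda^2=0$, $\dot u+u^2=0$, $\dot u+u^2+\lambda^2=0$.

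Concretely the steps are: (1) recall from Section \ref{HamCur} the reduction of $c$-convexity of a $C^2$ function $f$ to non-blow-up on $[0,1]$ of the matrix Riccati solution with initial data $\hess f$ — this is Theorem \ref{main}; (2) specialize the Hamiltonian curvature $R(t)$ for $H=\frac12|p|^2$ to the Riemannian curvature operator along the geodesic of speed $|\nabla f(x)|$ issuing from $x$ in direction $\nabla f(x)$, obtaining the bound $R(t)\le \lambda^2/|k| \cdot I = |\nabla f(x)|^2 k\, I$ on the normal directions $v_2,\dots,v_n$ and $R(t)=0$ on the radial direction $v_1$; (3) apply the scalar and matrix Riccati comparison theorems: for a symmetric solution of $\dot S+S^2+R=0$ with $R\le cI$, if $S(0)>U_c(0)$ where $U_c$ solves $\dot U+U^2+cI=0$ with the appropriate terminal behaviour, then $S$ exists on $[0,1]$ and $S(t)>U_c(t)$; carry this out separately on the radial line (with $c=0$, blow-up threshold $-1$ at $t=1$) and on the normal block (with $c=|\nabla f|^2 k$, giving the $\coth/\cot$ thresholds); (4) combine: the block-diagonal hypothesis on $\mathcal S$ is exactly the condition $S(0)>U(0)$ for the block comparison operator whose value at $t=1$ is finite, hence $S(t)$ does not blow up on $[0,1]$, so $f$ is $c$-convex; then invoke Theorem \ref{BeBu} / Theorem \ref{main} for the optimality of $\varphi(x)=\pi(\phi_1(df_x))$ for arbitrary $\mu$.

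The main obstacle I anticipate is step (2)–(3): one must be careful that the Hamiltonian curvature operator of \cite{AgGa} for $H=\frac12|p|^2$ genuinely reduces to the Riemannian sectional curvature operator along the correctly \emph{rescaled} geodesic, and that the splitting $T_xM = \Real v_1 \oplus v_1^{\perp}$ is preserved by the whole Riccati flow (it is, because the geodesic vector field is parallel along the geodesic and $R(v_1,\cdot)v_1$ annihilates $v_1$, so the radial line decouples) — the delicate point being that $\mathcal S$ need not be block-diagonal, yet the comparison only constrains it through the inequality $\mathcal S > (\text{block matrix})$, and one should check the Riccati comparison is monotone enough to tolerate off-diagonal entries in $\mathcal S$. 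A secondary technical point is the boundary/terminal normalization of the comparison ODEs — choosing the solution $U_c$ that blows up \emph{just after} $t=1$ so that the open condition $\mathcal S>U_c(0)$ is sharp — and making sure the strict inequality propagates (so the flow map $\pi\circ\phi_t|_\Lambda$ is a diffeomorphism, not merely an immersion) for all $t\in[0,1]$, including uniformity in $x$, which uses compactness of $M$.
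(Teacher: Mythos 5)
Your proposal follows essentially the same route as the paper: use the degree-two homogeneity of $H=\tfrac12|p|^2$ to single out the radial direction $v_1=\nabla f/|\nabla f|$ in the canonical frame (the paper does this via Proposition \ref{homo}, showing $e^1(t)$ can be taken to be $\frac{1}{|I^{-1}\alpha|}(\vec r(\alpha)-t\vec H(\alpha))$), observe that the Hamiltonian curvature matrix along the trajectory is $|\nabla f|^2\langle\RC(v^1,v^i)v^1,v^j\rangle$ and hence vanishes on the radial line and is bounded by $k|\nabla f|^2 I$ on $v_1^\perp$, and then run the matrix Riccati comparison exactly as in Theorem \ref{main} but against the block-diagonal curvature bound. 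All the key ideas, including the origin of the factor $\lambda=\sqrt{|k|}\,|\nabla f|$, are correctly identified.

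The one point where your plan deviates, and which you yourself flag as the main anticipated obstacle, is the proposal to ``carry this out separately on the radial line and on the normal block.'' A literal decoupling does not work: since $\mathcal S$ need not be block-diagonal, the Riccati flow $\dot S_t+S_t^2+\bar R_t=0$ does not preserve the splitting $\Real v_1\oplus v_1^\perp$, so you cannot reduce to a scalar equation plus a normal-block equation. The paper's resolution is simply to not decouple: it applies Royden's full matrix comparison theorem (Theorem \ref{compare}) to compare the actual Riccati solution with the solution of the constant-coefficient equation $\dot{\bar S}+\bar S^2+\mathrm{diag}(0,k|\nabla f|^2 I)=0$ with the \emph{same} (possibly non-block-diagonal) initial condition $\mathcal S$, and then uses Levin's explicit formula (Theorem \ref{explicit}) to write $\bar S_t=\Gamma_1(t)\Gamma_2(t)^{-1}$; the hypothesis $\mathcal S>\mathrm{diag}(-1,-\lambda\coth(\lambda)I)$ (etc.) is precisely what makes $\Gamma_2(t)$ invertible on $[0,1]$, so $\bar S_t$, and hence $S_t$, stays bounded below. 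With that adjustment your argument closes; the remainder (diffeomorphism property of $\varphi_t$, Hamilton--Jacobi/duality argument for optimality against arbitrary $\mu$) is quoted verbatim from the proof of Theorem \ref{main}, as you indicate.
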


\

Note that if the manifold $M$ has non-positive sectional curvature, then the condition in Theorem \ref{Riemannian} is just $\hess\, f>-I$. If the manifold is two dimensional and the Riemannian metric $\left<\cdot,\cdot\right>$ is compatible with an almost complex structure $J$ (ie. there is a endomorphism $J:TM\to TM$ such that $J^2=-I$ and $\left<Ju,Jv\right>=\left<u,v\right>$), then the frame $v_1,v_2$ becomes $\frac{\nabla f}{|\nabla f|}$ and $\frac{J\nabla f}{|\nabla f|}$, respectively. Therefore, Theorem \ref{Riemannian} simplifies to the following. 

\

\begin{thm}\label{2d}
Assume that the manifold $M$ is two dimensional and there is a complex structure $J$ which is compatible with the Riemannian metric $\left<\cdot,\cdot\right>$. Let $L$ be the Lagrangian $L(x,v)=\frac{1}{2}|v|^2$ and assume that the Gauss curvature $K\leq k$ for some constant $k$. Let $f$ be a $C^2$ function and let $\lambda:=\sqrt{|k|}|\nabla f(x)|$. We define the following functions:
\[
\xi=\begin{cases}
\lambda\coth(\lambda) & \text{if } k<0 \\
1 & \text{if } k=0 \\
\lambda\cot(\lambda) & \text{if } k>0 , 
\end{cases}
\]
\[
h_1=\left<(\hess\,f(\nabla f),\nabla f\right>,
\]
\[
h_2=\left<(\hess\,f(J\nabla f),J\nabla f\right>.
\]
Assume that the following inequalities hold for all points $x$ for which $\nabla f(x)\neq 0$:
\begin{itemize}
\item  $\det\hess\,f>-|\nabla f|^2\left(\xi h_1+h_2 +\xi |\nabla f|^2\right)$,
\item  $h_1+h_2 >-(\xi+1)|\nabla f|^2$.
\end{itemize}
Then $f$ is $c$-convex and the map $\varphi(x):=\pi(\phi_1(df_x))$ is the unique optimal map pushing any Borel probability measure $\mu$ forward to $\varphi_*\mu$. 
\end{thm}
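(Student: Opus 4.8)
The plan is to derive Theorem \ref{2d} as a direct specialization of Theorem \ref{Riemannian} to the two-dimensional case, so the only real work is translating the matrix inequality $\mathcal S > \left(\begin{smallmatrix} -1 & 0 \\ 0 & -\lambda\cot\lambda\, I\end{smallmatrix}\right)$ (and its hyperbolic/flat analogues) into the two scalar inequalities stated in terms of $h_1$, $h_2$ and $\det\hess f$. First I would fix a point $x$ with $\nabla f(x)\neq 0$ and use the compatibility of $J$ with the metric: since $\dim M = 2$, the vectors $v_1 = \nabla f/|\nabla f|$ and $v_2 = J\nabla f/|\nabla f|$ form an orthonormal basis of $T_xM$, which is exactly the basis used to define $\mathcal S$ in the setup of Theorem \ref{Riemannian}. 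In this basis $\xi$ plays the role of $\lambda\cot\lambda$ (resp.\ $\lambda\coth\lambda$, resp.\ $1$), so the right-hand side of the inequality in Theorem \ref{Riemannian} is the $2\times 2$ matrix $B := \left(\begin{smallmatrix} -1 & 0 \\ 0 & -\xi\end{smallmatrix}\right)$, and the claim $f$ is $c$-convex reduces to checking $\mathcal S - B > 0$, i.e.\ that the symmetric matrix $\mathcal S - B$ is positive definite.

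Next I would express the entries of $\mathcal S$ in the $(v_1,v_2)$ basis in terms of the invariant quantities: by definition $\mathcal S_{11} = \langle \hess f(v_1), v_1\rangle = h_1/|\nabla f|^2$ and $\mathcal S_{22} = \langle \hess f(v_2), v_2\rangle = h_2/|\nabla f|^2$, while $\det \mathcal S = \mathcal S_{11}\mathcal S_{22} - \mathcal S_{12}^2 = (\det \hess f)/|\nabla f|^4$ because the change of basis from a coordinate frame to $(v_1,v_2)$ is orthogonal and hence determinant-preserving up to the scaling $|\nabla f|$ in each slot — more precisely $\det\hess f$ is basis-independent as the determinant of the endomorphism $\hess f$, and $\det\mathcal S$ equals that determinant. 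Then positive definiteness of the symmetric $2\times 2$ matrix $\mathcal S - B$ is equivalent, by Sylvester's criterion, to the pair of conditions $\operatorname{tr}(\mathcal S - B) > 0$ and $\det(\mathcal S - B) > 0$. Computing the trace gives $\mathcal S_{11} + \mathcal S_{22} + 1 + \xi = (h_1 + h_2)/|\nabla f|^2 + (1+\xi) > 0$, which after multiplying by $|\nabla f|^2 > 0$ is exactly the second bulleted inequality $h_1 + h_2 > -(\xi+1)|\nabla f|^2$. Computing the determinant gives $\det(\mathcal S - B) = (\mathcal S_{11}+1)(\mathcal S_{22}+\xi) - \mathcal S_{12}^2 = \det\mathcal S + \xi \mathcal S_{11} + \mathcal S_{22} + \xi$; substituting the expressions above and clearing the denominator $|\nabla f|^4$ turns $\det(\mathcal S - B) > 0$ into $\det\hess f + \xi h_1 |\nabla f|^2 + h_2|\nabla f|^2 + \xi|\nabla f|^4 > 0$, i.e.\ the first bulleted inequality $\det\hess f > -|\nabla f|^2(\xi h_1 + h_2 + \xi|\nabla f|^2)$.

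Having shown that the two bulleted hypotheses are equivalent to $\mathcal S - B > 0$ at every point where $\nabla f \neq 0$, I would then note that at points where $\nabla f(x) = 0$ the basis $(v_1, v_2)$ degenerates, but there $\lambda = 0$ and $\xi = 1$, and the matrix $B$ becomes $-I$; the conditions degenerate to $\hess f > -I$ at such points, which is automatically implied by continuity from the nearby inequalities together with $\hess f$ being $C^0$ — alternatively one checks directly that the bulleted inequalities, read in the limit $|\nabla f|\to 0$, force $\det\hess f > 0$ and $\operatorname{tr}\hess f > -2$, hence $\hess f + I > 0$. Either way the full hypothesis of Theorem \ref{Riemannian} is met at all points, so that theorem yields $c$-convexity of $f$ and the conclusion that $\varphi(x) = \pi(\phi_1(df_x))$ is the unique optimal map. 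The main obstacle — really the only point requiring care rather than bookkeeping — is the behavior at the critical points $\nabla f(x) = 0$: one must confirm that the stated open inequalities, which are phrased only for $\nabla f(x)\neq 0$, still deliver the strict positivity $\mathcal S > -I$ needed there, and that no continuity gap opens up; everything else is the linear-algebra translation described above.
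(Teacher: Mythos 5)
Your overall strategy is exactly the paper's: the paper offers no proof of Theorem \ref{2d} beyond the remark that in dimension two the frame $v_1,v_2$ of Theorem \ref{Riemannian} becomes $\nabla f/|\nabla f|$ and $J\nabla f/|\nabla f|$, so the content of a proof is precisely the Sylvester-criterion translation you carry out, and your trace computation correctly reproduces the second bullet. However, your determinant step is internally inconsistent. You assert both $\det\mathcal S=(\det\hess\,f)/|\nabla f|^4$ and that ``$\det\mathcal S$ equals the basis-independent determinant of the endomorphism $\hess\,f$''; since $\mathcal S$ is the matrix of $\hess\,f$ in the \emph{orthonormal} frame $(v_1,v_2)$, the latter is what holds for the endomorphism determinant, and then $\det(\mathcal S-B)>0$ translates to $\det\hess\,f>-\tfrac{1}{|\nabla f|^2}\left(\xi h_1+h_2+\xi|\nabla f|^2\right)$, which differs from the stated first bullet by a factor of $|\nabla f|^4$. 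The bullet as written is recovered only if $\det\hess\,f$ is read as the determinant of the matrix $\left(\left<\hess\,f(w_i),w_j\right>\right)$ in the \emph{unnormalized} frame $w_1=\nabla f$, $w_2=J\nabla f$ (which is at least consistent with how $h_1,h_2$ are defined). You must commit to one convention and delete the contradictory parenthetical; as written the two sentences cannot both be true.

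The second problem is your treatment of critical points, which you rightly single out as the delicate step but then dispose of with arguments that fail. Passing to the limit $|\nabla f|\to 0$ in strict inequalities yields only closed conditions, and your fallback claim that $\det\hess\,f>0$ together with $\tr\,\hess\,f>-2$ implies $\hess\,f+I>0$ is false: take eigenvalues $-1.8$ and $-0.1$. The condition genuinely needed at a point with $\nabla f(x_0)=0$ is $\hess\,f(x_0)>-I$, because there the comparison Riccati equation in the proof of Theorem \ref{Riemannian} degenerates to $\dot S_t+S_t^2=0$ with $S_0=\hess\,f(x_0)$, whose solution is bounded on $[0,1]$ exactly when all eigenvalues of $S_0$ exceed $-1$; and since $f$ is $C^2$ on a compact manifold it always has critical points (e.g.\ a maximum, where $\hess\,f\le 0$), so the issue cannot be avoided. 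The two bulleted hypotheses, imposed only where $\nabla f\neq 0$, do not imply $\hess\,f>-I$ at critical points. This gap is arguably inherited from the statement of Theorem \ref{2d} itself, but your proof must either add that hypothesis at critical points or supply a correct argument for it; the ones you give do not close it.
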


\

The paper is organized as follows. In Section \ref{HamCur}, various notions about curvature of Hamiltonian system needed in this paper is recalled. The general result (Theorem \ref{main}) mentioned above together with its consequence (Theorem \ref{natural}), are stated and proved in Section \ref{mainS}. Finally, the proof of Theorem \ref{Riemannian} is given in Section \ref{RiemannianS}. 

\bigskip

\section{Curvature of Hamiltonian System}\label{HamCur}

In this section, we recall some definitions and properties about curvature of Hamiltonian system needed in this paper (see \cite{AgGa,Ag} for a more detail discussion). 

Let $H:T^*M \to \Real$ be a Hamiltonian and let $\vec H$ be the corresponding Hamiltonian vector field defined by 
\[
\omega({\vec H},\cdot)=-dH(\cdot), 
\]
where $\omega$ is the standard symplectic structure on the cotangent bundle $T^*M$ (ie. if $(x_1,...,x_n,p_1,...,p_n)$ is a local coordinate system of $T^*M$, then $\omega$ is given by $\omega:=\sum_{i=1}^ndp_i\wedge dx_i$). 

Recall that a Lagrangian subspace of a $2n$-dimensional symplectic vector space is a $n$-dimensional subspace on which the symplectic form vanishes. Let $\alpha$ be a point of the cotangent bundle $T^*M$ and let $\ver_\alpha$ be the subspace, called the vertical space, of the tangent space $T_\alpha T^*M$ defined by 
\begin{equation}\label{verspace}
\ver_\alpha:=\{X\in T_\alpha
T^*M|d\pi(X)=0\},
\end{equation}
where $\pi:T^*M\to M$ is the cotangent bundle projection. 

The vertical space $\ver_\alpha$ is a Lagrangian subspace of the symplectic vector space $T_\alpha T^*M$. If $\phi_t$ is the flow of the Hamiltonian vector field $\vec H$, then we can define the following 1-parameter family of Lagrangian subspaces in $T_\alpha T^*M$. 
\[
J_\alpha(t)=d\phi_t^{-1}(\ver_{\phi_t(\alpha)})=\{d\phi_t^{-1}(X)|X\in\ver_{\phi_t(\alpha)}\}.
\]

The family $J_\alpha$ defines a curve, called the \emph{Jacobi curve} at the point $\alpha$, in the space of all Lagrangian subspaces contained in $T_\alpha T^*M$. For each fix time $t$, we can define an inner product $B_\alpha^t$ on each subspace $J_\alpha(t)$ by 
\[
B^{t_0}_\alpha(e,\bar e)=-\omega_\alpha(e,\dot{\bar e}(t_0)),
\]
where $\bar e(\cdot)$ is a curve in the cotangent space $T_\alpha T^*M$ such that $\bar e(t)$ is contained in $J_\alpha(t)$ for each $t$. 

The following proposition shows that if the Hamiltonian is fibrewise strictly convex, then the inner products $B^t$ are positive definite. 

\

\begin{prop}\cite{Ag}\label{canonicalbilinearform}
Assume that the restriction $H|_{T_\alpha^*M}$ of the Hamiltonian $H$ to each fibre $T^*_\alpha M$ is strictly convex. Then the bilinear form $B_\alpha^t$ is positive definite on $J_\alpha(t)$. Moreover, $B_\alpha^0$ is given by 
\[
B^0_\alpha(\partial_{p_i},\partial_{p_j})=H_{p_ip_j}.
\]
\end{prop}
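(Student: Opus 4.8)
The plan is to prove Proposition \ref{canonicalbilinearform} by working in a convenient Darboux chart and exploiting the structure of the vertical space together with the first-order behavior of the Hamiltonian flow. First I would fix the point $\alpha\in T^*M$ and choose local coordinates $(x_1,\dots,x_n,p_1,\dots,p_n)$ on $T^*M$ that are symplectic (so $\omega=\sum dp_i\wedge dx_i$) and such that the vertical space $\ver_{\phi_t(\alpha)}$ is spanned by $\partial_{p_1},\dots,\partial_{p_n}$ at every point of the fibre. A basis of $J_\alpha(t)$ is then obtained by pushing these forward: $e_i(t)=d\phi_t^{-1}(\partial_{p_i}|_{\phi_t(\alpha)})$, which is a curve of vectors in the fixed space $T_\alpha T^*M$ with $e_i(0)=\partial_{p_i}$. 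The quantity to understand is $B^{t_0}_\alpha(e_i,e_j)=-\omega_\alpha(e_i(t_0),\dot e_j(t_0))$.

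The key computational step is to identify $\dot e_j(t_0)$. Differentiating $e_j(t)=d\phi_t^{-1}(\partial_{p_j})$ in $t$ and using that $\phi_t$ is the flow of $\vec H$, one gets $\dot e_j(t)=-d\phi_t^{-1}\big([\vec H,\cdot]\big)$ applied appropriately — more precisely, if $\bar e(t)\in J_\alpha(t)$ then along the flow the vector $d\phi_{t}(\bar e(t))$ lies in $\ver_{\phi_t(\alpha)}$, and differentiating this relation produces a term involving the linearization of $\vec H$, i.e. the symplectic gradient of the Hessian of $H$. At $t_0=0$ this is the cleanest: $e_j(0)=\partial_{p_j}$, and a direct computation of $\dot e_j(0)$ using the local formula for $\vec H$ shows that the $\partial_{x}$-components of $\dot e_j(0)$ are governed precisely by the matrix $H_{p_ip_j}$ evaluated at $\alpha$. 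Pairing against $e_i(0)=\partial_{p_i}$ via $\omega$ (which couples $\partial_{p_i}$ only with $\partial_{x_i}$) then yields $B^0_\alpha(\partial_{p_i},\partial_{p_j})=H_{p_ip_j}$, giving the stated formula and, since the fibrewise Hessian $H_{p_ip_j}$ is positive definite by strict convexity of $H|_{T^*_\alpha M}$, positive definiteness of $B^0_\alpha$.

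For positive definiteness of $B^t_\alpha$ at arbitrary $t$, I would reduce to the case $t=0$ by equivariance: applying the symplectomorphism $\phi_{t}$ identifies the Jacobi curve at $\alpha$ (time-translated) with the Jacobi curve at $\phi_t(\alpha)$, and under this identification $B^t_\alpha$ corresponds to $B^0_{\phi_t(\alpha)}$, which is positive definite by the computation just described applied at the point $\phi_t(\alpha)$ (whose fibre Hamiltonian is again strictly convex). This uses that $\phi_t$ preserves $\omega$ and maps vertical spaces along the orbit to vertical spaces, so that the definition of $B$ is natural under the flow.

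The main obstacle is the correct bookkeeping in the middle step: making rigorous the claim that $\dot{\bar e}(t_0)$ is well defined independent of the chosen extension $\bar e(\cdot)$ (this is where one needs that the correction lies in $J_\alpha(t_0)$ itself, on which $\omega$ vanishes, so the pairing $-\omega_\alpha(e,\dot{\bar e}(t_0))$ is unambiguous), and then carrying out the linearization of the Hamiltonian flow carefully enough to see the Hessian block $H_{p_ip_j}$ emerge with the right sign. Once the $t=0$ formula is in hand, everything else is formal. I will also want to note explicitly that $\ver_\alpha$ is Lagrangian — immediate since $\omega$ restricted to $\ker d\pi$ is $\sum dp_i\wedge dx_i$ evaluated on vectors with no $\partial_{x}$-component, hence zero — so that each $J_\alpha(t)=d\phi_t^{-1}(\ver_{\phi_t(\alpha)})$ is Lagrangian as well, being the image of a Lagrangian subspace under a linear symplectomorphism.
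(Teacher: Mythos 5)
Your proposal is correct, and since the paper quotes this proposition from \cite{Ag} without proof, there is nothing in the text to compare against; your plan is the standard argument from the cited source. The key computation does work out with the signs as you hope: with $e^j(t)=d\phi_t^{-1}(\partial_{p_j}|_{\phi_t(\alpha)})$ one has $\dot e^j(0)=[\vec H,\partial_{p_j}]=-\sum_i H_{p_ip_j}\partial_{x_i}+\sum_i H_{x_ip_j}\partial_{p_i}$, and pairing with $-\omega_\alpha(\partial_{p_i},\cdot)$ kills the vertical part and returns $H_{p_ip_j}$; the reduction of $B^t_\alpha$ to $B^0_{\phi_t(\alpha)}$ via $d\phi_t$ and the well-definedness remark (the ambiguity in $\dot{\bar e}(t_0)$ lies in the Lagrangian subspace $J_\alpha(t_0)$) are both sound. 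The only point worth flagging is terminological: ``strictly convex'' does not in general force the fibrewise Hessian $H_{p_ip_j}$ to be positive definite at every point, but in this paper (as in \cite{Ag}) the phrase is used for the positive-definite-Hessian condition coming from the Legendre transform of a Tonelli Lagrangian, so your final step is fine under that convention.
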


\

For the rest of this paper, we assume that the Lagrangian $L$ satisfies the conditions stated at the beginning of the introduction (ie. $L$ is superlinear and the second derivative of $L$ in the fibre direction is positive definite). Under these assumptions, the Hamiltonian $H$ is fibrewise strictly convex and $B^t_\alpha$ are positive definite quadratic forms. 

Recall that a basis $\{e^1,...,e^n,f^1,...,f^n\}$ in a symplectic vector space with symplectic form $\omega$ is a Darboux basis if the following conditions are satisfied: 
\[
\omega(e^i,f^j)=\delta_{ij}, \,\omega(e^i,e^j)=0, \,\omega(f^i,f^j)=0.
\]
We can also pick a special family of basis $e^1(t),...,e^n(t)$ of the subspace $J_\alpha(t)$ which is orthonormal
with respect to the inner product $B^t_\alpha$. More precisely, we have the following proposition (see \cite{Ag} for the proof). 

\

\begin{prop}\label{Darboux}\cite{Ag}
There exists a smooth family of basis $e^1(t),...,$ $e^n(t)$ on the
vector space $J_\alpha(t)$ orthonormal with respect to the
canonical inner product $B_\alpha^t$ such that
\[
\{e^1(t),...,e^n(t),f^1(t):=-\dot e^1(t),...,f^n(t):=-\dot e^n(t)\}
\]
forms a Darboux basis of the symplectic vector space $T_\alpha
T^*M$. Moreover, if $(\bar e^1(t),...,\bar e^n(t))$ is another
such family, then there exists a constant orthogonal matrix $\mathcal O$
such that $\bar e^i(t)=\sum\limits_{j=1}^n\mathcal O_{ij}e^j(t)$.
\end{prop}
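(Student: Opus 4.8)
\emph{Proof proposal.} The plan is to build the frame in two stages: first make a smooth frame of $J_\alpha(t)$ orthonormal for $B_\alpha^t$ by a Gram--Schmidt process, then correct it by a time-dependent orthogonal matrix chosen so that a single remaining relation holds, and finally read off the uniqueness statement from the same correction equation.

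I would begin by recording two elementary facts that reduce the Darboux condition to one scalar identity. For any smooth frame $g^1(t),\dots,g^n(t)$ of the Lagrangian subspace $J_\alpha(t)$ one automatically has $\omega_\alpha(g^i(t),g^j(t))=0$, and the definition of $B_\alpha^t$ gives $\omega_\alpha(g^i(t),-\dot g^j(t))=B_\alpha^t(g^i(t),g^j(t))$. (This last identity uses that $B_\alpha^t$ is well defined and symmetric, which rests on the observation that a curve lying in $J_\alpha(t)$ and vanishing at $t_0$ has derivative at $t_0$ inside the Lagrangian space $J_\alpha(t_0)$, together with differentiating $\omega_\alpha(g^i,g^j)\equiv 0$.) Consequently a $B_\alpha^t$-orthonormal smooth frame $e^1(t),\dots,e^n(t)$ already satisfies $\omega_\alpha(e^i,e^j)=0$ and $\omega_\alpha(e^i,-\dot e^j)=\delta_{ij}$, so the \emph{only} Darboux relation still to be engineered is $\omega_\alpha(\dot e^i,\dot e^j)=0$.

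To get started I would take any smooth frame of the smooth subbundle $t\mapsto J_\alpha(t)$ of the trivial bundle over the time interval — such a frame exists since the base is an interval and $t\mapsto J_\alpha(t)$ is smooth because $\phi_t$ is — and apply Gram--Schmidt with respect to $B_\alpha^t$, which is legitimate and produces a smooth orthonormal frame because $B_\alpha^t$ is positive definite by Proposition \ref{canonicalbilinearform}. Call it $\tilde e^1(t),\dots,\tilde e^n(t)$ and set $R(t)=\big(\omega_\alpha(\dot{\tilde e}^i,\dot{\tilde e}^j)\big)_{ij}$, a smooth antisymmetric matrix. I then look for the required frame in the form $e^i(t)=\sum_k O_{ik}(t)\,\tilde e^k(t)$ with $O(t)$ orthogonal. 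Expanding $\omega_\alpha(\dot e^i,\dot e^j)$ and using the identities above (the cross terms being evaluated via $\omega_\alpha(\tilde e^k,\dot{\tilde e}^l)=-\delta_{kl}$) gives $\omega_\alpha(\dot e^i,\dot e^j)=\big(-2\dot O O^{\mathrm T}+ORO^{\mathrm T}\big)_{ij}$, so the outstanding relation holds for all $t$ if and only if $O$ solves the linear ODE $\dot O=\tfrac12\,OR$. Solving this from any orthogonal initial value yields $O(t)$, and antisymmetry of $R$ forces $\tfrac{d}{dt}(OO^{\mathrm T})=0$, so $O(t)$ stays orthogonal; the resulting $e^i(t)$ is the desired frame. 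For uniqueness, two such frames are related by $\bar e^i=\sum_j P_{ij}e^j$ with $P(t)$ orthogonal, and the same expansion — now with the analogue of $R$ identically zero, since $\{e^i\}$ already satisfies $\omega_\alpha(\dot e^i,\dot e^j)=0$ — forces $\dot P=0$, so $P$ is a constant orthogonal matrix.

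The routine part is the ODE argument on the orthogonal group. The steps that deserve care, and that I expect to be the main obstacle, are the preliminary verification that $B_\alpha^t$ is well defined and symmetric — which is precisely where the Lagrangian property of the Jacobi curve enters — and the index bookkeeping in the expansion of $\omega_\alpha(\dot e^i,\dot e^j)$ that produces the equation $\dot O=\tfrac12 OR$.
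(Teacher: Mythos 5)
The paper does not prove this proposition itself --- it is quoted from \cite{Ag} --- so there is no in-paper argument to compare against; judged on its own, your proof is correct and is essentially the standard normalization argument for regular curves in the Lagrange Grassmannian. You correctly isolate $\omega_\alpha(\dot e^i,\dot e^j)=0$ as the only Darboux relation not already implied by the Lagrangian property of $J_\alpha(t)$ and $B^t_\alpha$-orthonormality, and the correction $\dot O=\tfrac12 OR$ with $R$ antisymmetric (so that $O$ stays orthogonal and the linear ODE is solvable on the whole interval) together with the $R\equiv 0$ case for uniqueness is exactly what is needed.
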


\

The 1-parameter family of Darboux basis
\[
\{e^1(t),...,e^n(t),f^1(t),...,f^n(t)\}
\]
in Proposition \ref{Darboux} is called a \emph{canonical frame} at $\alpha$. A canonical frame defines a splitting of the tangent bundle $TT^*M$. More precisely, let $\hor_\alpha$ be the subspace of the tangent space $T_\alpha T^*M$, called the horizontal space at $\alpha$, defined by 
\begin{equation}\label{horspace}
\hor_\alpha=\text{span}\{f^1(0),...,f^n(0)\}. 
\end{equation}
Then we have the splitting $T_\alpha T^*M=\ver_\alpha\oplus\hor_\alpha$ and both the vertical space $\ver_\alpha$ and the horizontal space $\hor_\alpha$ at $\alpha$ are Lagrangian subspaces. Recall that $\pi:T^*M\to M$ is the canonical projection. The restriction of the differential $d\pi$ to the horizontal space $\hor_\alpha$ gives an identification between $\hor_\alpha$ and $T_{\pi(\alpha)}M$. We let $v^\hor$ be the unique vector in the horizontal space $\hor_\alpha$ such that $d\pi(v^\hor)=v$. We call the vector $v^\hor$ the horizontal lift of $v$. 

Let $(x_1,...,x_n,p_1,...,p_n)$ be a local coordinate system of the cotangent bundle $T^*M$ and let $v$ be a tangent vector of the manifold which is given by $\sum v_i\partial_{x_i}$ in this system. Let $c_{ij}$ be structure constants defined by 
\[
v^\hor=\sum_{i=1}^nv_i\left(\partial_{x_i}+\sum_{j=1}^nc_{ij}\partial_{p_j}\right). 
\]
The following proposition gives a formula for the structure constants $c_{ij}$. 

\

\begin{prop}\label{structure}\cite{Ag}
The structure constants $c_{ij}$ satisfy
\[
\begin{split}
&2\sum_{k,l}H_{p_ip_k}c_{kl}H_{p_lp_j}\\
&=\sum_k (H_{p_k}H_{p_ip_jx_k}-H_{x_k}H_{p_ip_jp_k} -H_{p_ix_k}H_{p_kp_j}-H_{p_ip_k}H_{x_kp_j}). 
\end{split}
\]
\end{prop}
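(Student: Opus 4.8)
The plan is to compute everything in a fixed coordinate chart, to express the Jacobi curve through the fundamental matrix of the linearized Hamiltonian flow, and then to identify the horizontal space $\hor_\alpha$ with the graph of the constant term in the Laurent expansion at $t=0$ of that Jacobi curve.

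Fix coordinates $(x_1,\dots,x_n,p_1,\dots,p_n)$ near $\alpha$ and trivialize $TT^*M$ over the chart by the frame $\{\partial_{x_i},\partial_{p_i}\}$. In this trivialization the linearization of $\vec H$ along $\phi_t(\alpha)$ is the $2n\times 2n$ block matrix with block rows $(H_{px},\,H_{pp})$ and $(-H_{xx},\,-H_{xp})$, the entries being the indicated second derivatives of $H$ at $\phi_t(\alpha)$; hence $d\phi_t$ is represented by the fundamental solution $\Psi(t)$, $\Psi(0)=I$, whose blocks $\Psi_{xx},\Psi_{xp},\Psi_{px},\Psi_{pp}$ have Taylor coefficients at $t=0$ that are polynomials in the first and second derivatives of $H$ at $\alpha$. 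A vector $\zeta\in T_\alpha T^*M$ lies in $J_\alpha(t)$ exactly when $\Psi(t)\zeta$ is vertical, i.e. when its $x$-block vanishes; since $\Psi_{xp}(t)=t\,H_{pp}+O(t^2)$ and $H_{pp}$ is invertible by the fibrewise convexity of $H$, for small $t\neq 0$ the subspace $J_\alpha(t)$ is the graph $\{(\xi,\Sigma(t)\xi)\}$ of the symmetric matrix $\Sigma(t):=-\Psi_{xp}(t)^{-1}\Psi_{xx}(t)$. A short expansion then gives
\[
\Sigma(t)=-\frac1t\,H_{pp}^{-1}+\Sigma_0+O(t),\qquad \Sigma_0=-\frac12 H_{pp}^{-1}H_{px}-\frac12 H_{xp}H_{pp}^{-1}+\frac12 H_{pp}^{-1}\,\dot H_{pp}\,H_{pp}^{-1},
\]
where $\dot H_{pp}$ is the derivative along the Hamiltonian flow of the matrix $(H_{p_ip_j})$, so $(\dot H_{pp})_{ij}=\sum_k(H_{p_k}H_{p_ip_jx_k}-H_{x_k}H_{p_ip_jp_k})$.

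The crucial step is to show that $\hor_\alpha$, the span of $f^1(0),\dots,f^n(0)$, is exactly the graph $\{(\xi,\Sigma_0\xi):\xi\in\Real^n\}$. Write the canonical frame vectors as $e^i(t)=(\xi^i(t),\Sigma(t)\xi^i(t))$; since $e^i(0)\in\ver_\alpha$ we have $\xi^i(0)=0$, hence $\xi^i(t)=t\,\beta^i+t^2\gamma^i+O(t^3)$ by smoothness of the canonical frame. Let $E(t)$ be the matrix with columns $\xi^1(t),\dots,\xi^n(t)$. Computing the symplectic pairings shows that the normalization $B^t_\alpha(e^i,e^j)=\delta_{ij}$ (via $B^t_\alpha(e,\bar e)=-\omega_\alpha(e,\dot{\bar e}(t))$) is equivalent to $E^T\dot\Sigma E=I$, while the Darboux relation $\omega(f^i,f^j)=\omega(\dot e^i,\dot e^j)=0$ is equivalent to the matrix $(\dot E)^T\dot\Sigma E$ being symmetric. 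Inserting $\dot\Sigma(t)=\frac1{t^2}H_{pp}^{-1}+O(1)$ and matching the first two orders in $t$ yields first $(\beta^i)^TH_{pp}^{-1}\beta^j=\delta_{ij}$ (so the matrix $B$ with columns $\beta^i$ is invertible), and then that the matrix with entries $(\beta^i)^TH_{pp}^{-1}\gamma^j$ is simultaneously antisymmetric and symmetric, hence zero; therefore $\gamma^i=0$ for all $i$. Feeding $\gamma^i=0$ back in, $f^i(0)=-\dot e^i(0)=(-\beta^i,\,\Sigma_0(-\beta^i))$, and since the $\beta^i$ span $\Real^n$ the span of the $f^i(0)$ is $\{(\xi,\Sigma_0\xi)\}$.

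Granting this, the proof concludes at once: from $\hor_\alpha=\{(\xi,\Sigma_0\xi)\}$ and the definition of the horizontal lift we get $v^\hor=(v,\Sigma_0v)$ in the $(x,p)$-splitting, so the structure-constant matrix $C=(c_{ij})$ equals $\Sigma_0$ (automatically symmetric, as $\hor_\alpha$ is a Lagrangian complement to $\ver_\alpha$). Multiplying the displayed formula for $\Sigma_0$ on both sides by $H_{pp}$ gives $2\,H_{pp}\,C\,H_{pp}=\dot H_{pp}-H_{px}H_{pp}-H_{pp}H_{xp}$, and extracting the $(i,j)$ entry — with $(\dot H_{pp})_{ij}$ as above, $(H_{px}H_{pp})_{ij}=\sum_kH_{p_ix_k}H_{p_kp_j}$ and $(H_{pp}H_{xp})_{ij}=\sum_kH_{p_ip_k}H_{x_kp_j}$ — reproduces exactly the claimed identity $2\sum_{k,l}H_{p_ip_k}c_{kl}H_{p_lp_j}=\sum_k(H_{p_k}H_{p_ip_jx_k}-H_{x_k}H_{p_ip_jp_k}-H_{p_ix_k}H_{p_kp_j}-H_{p_ip_k}H_{x_kp_j})$. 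I expect the only genuinely delicate point to be the third paragraph — translating the canonical-frame conditions into conditions on $\Sigma(t)$ and the curves $\xi^i(t)$, and running the order-by-order matching that forces $\gamma^i=0$; the rest is routine bookkeeping with Taylor and Laurent expansions and $2\times2$-block matrix algebra.
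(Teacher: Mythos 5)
Your proof is correct. Note that the paper itself offers no proof of Proposition \ref{structure} --- it is quoted from Agrachev's lecture notes --- so there is nothing internal to compare against; your route (representing $J_\alpha(t)$ for small $t\neq 0$ as the graph of $\Sigma(t)=-\Psi_{xp}(t)^{-1}\Psi_{xx}(t)$ and identifying $\hor_\alpha$ with the graph of the free term $\Sigma_0$ of its Laurent expansion) is essentially the standard derivation behind Agrachev's canonical splitting, where the horizontal space is the value at $t=0$ of the derivative curve of the Jacobi curve. I checked the delicate third paragraph: with $\omega=\sum dp_i\wedge dx_i$ one indeed gets $B^t_\alpha(e^i,e^j)=(\xi^i)^T\dot\Sigma(t)\,\xi^j$ and $\omega(f^i,f^j)=(\xi^i)^T\dot\Sigma\dot\xi^j-(\dot\xi^i)^T\dot\Sigma\xi^j$, so the order-$t^0$ and order-$t^1$ matching forces $B^TH_{pp}^{-1}B=I$ and $B^TH_{pp}^{-1}G$ antisymmetric, while symmetry of $(\dot E)^T\dot\Sigma E$ at order $t^0$ forces it symmetric, hence $G=0$; the sign conventions are consistent with $B^0(\partial_{p_i},\partial_{p_j})=H_{p_ip_j}$ from Proposition \ref{canonicalbilinearform}, and the final block-matrix algebra $2H_{pp}\Sigma_0H_{pp}=\dot H_{pp}-H_{px}H_{pp}-H_{pp}H_{xp}$ reproduces the stated identity.
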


\

The next proposition gives the definition of the curvature. 

\

\begin{prop}\label{canonical}\cite{Ag}
Let $\{e^1(t),...,e^n(t),f^1(t),...,f^n(t)\}$ be a canonical frame at $\alpha$. Then there is a family of linear maps 
$R_\alpha(t):J_\alpha(t)\to J_\alpha(t)$ satisfying
\[
\dot e^i(t)=-f^i(t),\quad \dot f=R_\alpha(t)e^i(t).
\]
The operators $R_\alpha(t)$ are independent of the choice of canonical
frame and it is symmetric with respect to the positive definite quadratic form $B^t_\alpha$:  
\[
B^t_\alpha(R_\alpha(t)u,v)=B^t_\alpha(u,R_\alpha(t)v).
\] 
\end{prop}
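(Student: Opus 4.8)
\emph{Proof plan.} Since $\dot e^i(t)=-f^i(t)$ is built into the definition of a canonical frame, the content of the statement is threefold: to extract a well-defined operator $R_\alpha(t)$ on $J_\alpha(t)$ from the requirement $\dot f^i(t)=R_\alpha(t)e^i(t)$; to show it is $B^t_\alpha$-symmetric; and to show it does not depend on the chosen canonical frame. The plan is to obtain all three by differentiating in $t$ the Darboux relations $\omega_\alpha(e^i,f^j)=\delta_{ij}$, $\omega_\alpha(e^i,e^j)=\omega_\alpha(f^i,f^j)=0$ together with $f^i=-\dot e^i$; the only analytic ingredient is the smoothness of the frame guaranteed by Proposition~\ref{Darboux}.

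First I would show $\dot f^i(t)\in J_\alpha(t)$. The curves $e^i(\cdot),f^i(\cdot)$ lie in the fixed symplectic vector space $T_\alpha T^*M$, so $\dot f^i(t)$ makes sense, and since $J_\alpha(t)$ is Lagrangian it equals its own $\omega_\alpha$-orthogonal complement; hence it suffices to verify $\omega_\alpha(\dot f^i(t),e^j(t))=0$ for all $j$. Differentiating the Darboux identity $\omega_\alpha(f^i(t),e^j(t))=-\delta_{ij}$ and using $\dot e^j=-f^j$ gives $\omega_\alpha(\dot f^i,e^j)=-\omega_\alpha(f^i,\dot e^j)=\omega_\alpha(f^i,f^j)=0$. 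Thus $\dot f^i(t)=\sum_j a_{ij}(t)e^j(t)$ for some functions $a_{ij}(t)$, and one \emph{defines} $R_\alpha(t):J_\alpha(t)\to J_\alpha(t)$ by $R_\alpha(t)e^i(t):=\dot f^i(t)$ and linear extension; this is a genuine definition because the $e^i(t)$ form a basis of $J_\alpha(t)$.

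Next, for symmetry: by the $B^t_\alpha$-orthonormality of the frame, $B^t_\alpha(R_\alpha(t)e^i,e^j)=B^t_\alpha(\sum_k a_{ik}e^k,e^j)=a_{ij}$, while pairing $\dot f^i=\sum_k a_{ik}e^k$ with $f^j$ and using $\omega_\alpha(e^k,f^j)=\delta_{kj}$ gives $a_{ij}=\omega_\alpha(\dot f^i,f^j)$. Differentiating the Darboux relation $\omega_\alpha(f^i(t),f^j(t))=0$ then gives $a_{ij}=\omega_\alpha(\dot f^i,f^j)=-\omega_\alpha(f^i,\dot f^j)=a_{ji}$, so the matrix $(a_{ij})$ is symmetric and hence $B^t_\alpha(R_\alpha(t)u,v)=B^t_\alpha(u,R_\alpha(t)v)$ on $J_\alpha(t)$. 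For independence of the frame I would invoke the uniqueness clause of Proposition~\ref{Darboux}: any other canonical frame satisfies $\bar e^i=\sum_j\mathcal O_{ij}e^j$ with $\mathcal O$ a \emph{constant} orthogonal matrix, so $\bar f^i=-\dot{\bar e}^i=\sum_j\mathcal O_{ij}f^j$ and $\dot{\bar f}^i=\sum_j\mathcal O_{ij}\dot f^j=\sum_j\mathcal O_{ij}R_\alpha(t)e^j=R_\alpha(t)\bar e^i$; since $\{\bar e^i(t)\}$ is again a basis of $J_\alpha(t)$, the operator it produces is exactly $R_\alpha(t)$.

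I do not expect a genuine obstacle here: each step is a single differentiation of a bilinear identity. The one point requiring care is the bookkeeping of signs — the precise form of the Darboux relations, the sign in $f^i=-\dot e^i$, and the sign in $B^{t}_\alpha(e,\bar e)=-\omega_\alpha(e,\dot{\bar e})$ — since a dropped sign would reverse the sign of $R_\alpha(t)$, which later feeds into the curvature inequalities. All the substantive facts that the argument rests on — the positive definiteness of $B^t_\alpha$, and the existence and uniqueness up to a constant orthogonal matrix of a $B^t_\alpha$-orthonormal canonical frame — are already supplied by Propositions~\ref{canonicalbilinearform} and~\ref{Darboux}, which may be quoted directly.
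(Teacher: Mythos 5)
Your argument is correct and complete. Note that the paper itself gives no proof of this proposition — it is quoted from \cite{Ag} — so there is no in-text argument to compare against; what you have written is the standard derivation (differentiate the constant Darboux pairings $\omega(e^i,f^j)=\delta_{ij}$, $\omega(f^i,f^j)=0$ in the fixed symplectic space $T_\alpha T^*M$, use that a Lagrangian subspace is its own $\omega$-orthogonal complement to see $\dot f^i(t)\in J_\alpha(t)$, and use the constancy of the orthogonal matrix $\mathcal O$ in Proposition~\ref{Darboux} for frame-independence), and all the facts you invoke are exactly those supplied by Propositions~\ref{canonicalbilinearform} and~\ref{Darboux}. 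The sign bookkeeping also checks out: with $f^i=-\dot e^i$ one gets $B^t_\alpha(e^i,e^j)=-\omega(e^i,\dot e^j)=\omega(e^i,f^j)=\delta_{ij}$, consistent with orthonormality, and your $a_{ij}=\omega(\dot f^i,f^j)$ is indeed the matrix of $R_\alpha(t)$ in the $B^t_\alpha$-orthonormal basis.
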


\

The operator $R_\alpha^H:=R_\alpha(0):\ver_\alpha\to\ver_\alpha$ is called the
\emph{curvature operator} of the Hamiltonian $H$. It also satisfies the following property 
\begin{equation}\label{canonicalR}
R_\alpha(t)=d\phi_t^{-1}R^H_{\phi_t(\alpha)}d\phi_t.
\end{equation}

If $X$ is a vector in the tangent space $T_\alpha T^*M$, then the vertical part and the horizontal part of $X$ are denoted by $X_\ver$ and $X_\hor$, respectively. The following proposition gives a characterization of the curvature operator $R_\alpha^H$ without using canonical frame. 

\

\begin{prop}\label{curvaturechar}\cite{Ag}
Assume that $X$ is a vertical vector field (ie. $X(\alpha)$ is contained in $\ver_\alpha$ for each $\alpha$). Then the curvature operator $R^H$ satisfies
\[
R^H_\alpha(V(\alpha))=-[\vec H,[\vec H,V]_\hor]_\ver(\alpha).
\]
\end{prop}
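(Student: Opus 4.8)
The plan is to verify the characterization by computing the bracket $[\vec H,[\vec H,V]_\hor]_\ver$ directly against a canonical frame at $\alpha$, using only the structural identities from Propositions~\ref{Darboux} and~\ref{canonical}. First I would recall that the curve $J_\alpha(t) = d\phi_t^{-1}(\ver_{\phi_t(\alpha)})$ has the following dynamical meaning: a vector field $e(t)$ along $\phi_t(\alpha)$ stays in $\ver_{\phi_t(\alpha)}$ (i.e.\ $d\pi(e(t))\equiv 0$) if and only if $d\phi_t^{-1}(e(t))$ is a constant section of $J_\alpha$, which happens precisely when $[\vec H, \tilde e]$ is again vertical, where $\tilde e$ is the vertical extension. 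More generally, for a canonical frame $\{e^i(t), f^i(t)\}$ pushed back to $\alpha$, the relations $\dot e^i = -f^i$ and $\dot f^i = R_\alpha(t)e^i$ translate, via $e^i(t) = d\phi_t^{-1}(E^i_{\phi_t(\alpha)})$ for suitable vector fields $E^i$ on $T^*M$, into Lie bracket identities: $\frac{d}{dt}\big|_0 d\phi_t^{-1}(E^i) = -[\vec H, E^i]$, so that $[\vec H, E^i] = f^i$ and $[\vec H, F^i] = -R^H_\alpha e^i$ at $\alpha$, where $F^i$ are vector fields extending $f^i(0)$.

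Then I would take $V$ a vertical vector field with $V(\alpha) \in \ver_\alpha = J_\alpha(0)$, write $V(\alpha) = \sum_i a_i e^i(0)$, and extend along the flow so that near $\alpha$ we may compare $V$ with the frame fields $E^i$. Computing $[\vec H, V]$: the first bracket has horizontal part controlled by the $f^i(0) = -\dot e^i(0)$ terms — concretely $[\vec H, V]_\hor(\alpha) = \sum_i a_i f^i(0)$ up to terms whose horizontal part vanishes (the derivatives of the coefficients $a_i$ contribute vertical vectors only, since $e^i$ are vertical). Taking the bracket with $\vec H$ a second time and extracting the vertical part kills the horizontal propagation and leaves $[\vec H, \sum a_i F^i]_\ver(\alpha) = \sum_i a_i [\vec H, F^i]_\ver(\alpha) = -\sum_i a_i R^H_\alpha e^i(0) = -R^H_\alpha(V(\alpha))$, where the defining relation $\dot f^i = R_\alpha(t) e^i$ from Proposition~\ref{canonical} is used at $t=0$, together with the fact that $R^H_\alpha$ maps $\ver_\alpha$ to $\ver_\alpha$ so no further horizontal correction survives. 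This yields the claimed formula $R^H_\alpha(V(\alpha)) = -[\vec H,[\vec H,V]_\hor]_\ver(\alpha)$.

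The main obstacle is bookkeeping: one must show that the coefficient derivatives $\dot a_i$ and the non-tensoriality of the Lie bracket genuinely do not pollute the two components we care about — the horizontal part after one bracket and the vertical part after two. The key point making this work is that $\ver_\alpha$ and $\hor_\alpha$ are both Lagrangian and that in a canonical frame $e^i$ is vertical while $f^i$ spans the horizontal space, so derivatives of vertical sections stay tangent to directions that, after one more bracket with $\vec H$ and projection to $\ver$, contribute nothing new beyond $R^H_\alpha$ acting on the $e^i$. I would also need to check that the answer is independent of the chosen extensions of $V$ and of the frame off $\alpha$; this follows because the left side $R^H_\alpha$ is frame-independent by Proposition~\ref{canonical} and the right side is manifestly a pointwise expression once one verifies that replacing $V$ by $V + W$ with $W(\alpha)=0$ changes $[\vec H,[\vec H,V]_\hor]_\ver$ by something vanishing at $\alpha$ — again using that $W$ and $[\vec H,W]$ vanish or are suitably controlled at $\alpha$.
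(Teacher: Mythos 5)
The paper gives no proof of Proposition~\ref{curvaturechar}; it is quoted verbatim from \cite{Ag}. Your argument is the standard one and, as far as I can check, it is correct: pull everything back to $\alpha$ along the flow via $\frac{d}{dt}\,d\phi_t^{-1}\bigl(Y(\phi_t(\alpha))\bigr)=d\phi_t^{-1}\bigl([\vec H,Y](\phi_t(\alpha))\bigr)$, write $d\phi_t^{-1}(V(\phi_t(\alpha)))=\sum_i a_i(t)e^i(t)$, and use the structure equations $\dot e^i=-f^i$, $\dot f^i=R_\alpha(t)e^i$ of Proposition~\ref{canonical} to get $[\vec H,V]_\hor(\alpha)=-\sum_i a_i(0)f^i(0)$ (the $\dot a_i$ terms are vertical and are projected away) and then $[\vec H,[\vec H,V]_\hor]_\ver(\alpha)=-\sum_i a_i(0)R_\alpha(0)e^i(0)=-R^H_\alpha(V(\alpha))$ (the new $\dot a_i$ terms are now horizontal and are projected away). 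Two points you leave implicit deserve to be spelled out. First, the second bracket requires knowing the vector field $[\vec H,V]_\hor$ along the whole orbit $t\mapsto\phi_t(\alpha)$, not just at $\alpha$, and the identity $d\phi_t^{-1}\bigl([\vec H,V]_\hor(\phi_t(\alpha))\bigr)=-\sum_i a_i(t)f^i(t)$ rests on the compatibility $d\phi_t^{-1}(\hor_{\phi_t(\alpha)})=\mathrm{span}\{f^1_\alpha(t),\dots,f^n_\alpha(t)\}$; this follows from the uniqueness statement in Proposition~\ref{Darboux} together with (\ref{canonicalR}), and it is the one genuinely structural input of the proof. Second, your convention $\frac{d}{dt}\big|_0\, d\phi_t^{-1}(E)=-[\vec H,E]$ is opposite to the usual sign for the Lie derivative; since the bracket with $\vec H$ is applied twice the ambiguity cancels and the final formula is unaffected, but you should fix one convention and keep it throughout. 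With those two items made explicit, the bookkeeping you describe is exactly what closes the computation.
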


\

Next, we consider the special case where the Hamiltonian is natural mechanical. More precisely, let $\left<\cdot,\cdot\right>$ be a Riemannian metric on $M$. Let $I:TM\to T^*M$ be the identification of the tangent bundle $TM$ and the cotangent bundle $T^*M$ defined by $I(v)=\left<v,\cdot\right>$. Let $U:M\to\Real$ be smooth function, called potential. A Hamiltonian is natural mechanical if it is of the following form: 
\[
H(\alpha)=\frac{1}{2}|I^{-1}(\alpha)|^2+U(x), 
\]
where $x=\pi(\alpha)$ and $\pi:T^*M\to M$ is the canonical projection. 

The structure constants $c_{ij}$ in this case is related to the Christoffel symbols as follows:

\

\begin{prop}\label{structureconstant}
Assume that the Hamiltonian is natural mechanical and let $\Gamma_{ij}^k$ be the Christoffel symbols corresponding to the given Riemannian metric and a local coordinate system $(x_1,...,x_n)$ of the manifold $M$. Then the structure constants $c_{ij}$ in the induced coordinate system $(x_1,...,x_n,p_1,...,p_n)$ of the cotangent bundle $T^*M$ are given by 
\[
c_{ij}=\sum_{k=1}^n\Gamma_{ij}^kp_k. 
\]
\end{prop}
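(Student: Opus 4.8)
The plan is to derive the formula directly from the characterization of the structure constants in Proposition \ref{structure}. Fix a local coordinate chart $(x_1,\dots,x_n)$ on $M$, with the induced coordinates $(x_1,\dots,x_n,p_1,\dots,p_n)$ on $T^*M$, and write the Riemannian metric as $\sum_{k,l}g_{kl}\,dx_k\,dx_l$ with inverse matrix $(g^{kl})$. In these coordinates the natural mechanical Hamiltonian is $H(x,p)=\frac{1}{2}\sum_{k,l}g^{kl}(x)p_kp_l+U(x)$.

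First I would record the partial derivatives of $H$ that enter Proposition \ref{structure}:
\[
H_{p_ip_j}=g^{ij},\quad H_{p_ip_jp_k}=0,\quad H_{p_i}=\sum_lg^{il}p_l,\quad H_{p_ix_k}=\sum_l(\partial_{x_k}g^{il})p_l,\quad H_{p_ip_jx_k}=\partial_{x_k}g^{ij}.
\]
A key point is that $H_{x_k}$ — the only quantity through which the potential $U$ could enter — appears on the right-hand side of Proposition \ref{structure} only multiplied by $H_{p_ip_jp_k}=0$; hence the structure constants are unchanged if $U$ is dropped, i.e. they coincide with those of the pure geodesic Hamiltonian. Substituting the derivatives above into Proposition \ref{structure} gives
\[
2\sum_{k,l}g^{ik}c_{kl}g^{lj}=\sum_{k,m}\Big(g^{km}p_m\,\partial_{x_k}g^{ij}-g^{kj}p_m\,\partial_{x_k}g^{im}-g^{ik}p_m\,\partial_{x_k}g^{jm}\Big).
\]
Since $(g^{ij})$ is invertible, I would contract both sides with $g_{ai}g_{bj}$ and sum over $i,j$ to solve for $c_{ab}$.

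Carrying out this contraction and repeatedly invoking the identity $\partial_{x_k}g^{ij}=-\sum_{c,d}g^{ic}g^{jd}\partial_{x_k}g_{cd}$ (obtained by differentiating $\sum_kg^{ik}g_{kj}=\delta_{ij}$), the factors $g^{\cdot\cdot}$ collapse pairwise and one is left with
\[
c_{ab}=\frac{1}{2}\sum_{m,k}g^{mk}\big(\partial_{x_a}g_{bk}+\partial_{x_b}g_{ak}-\partial_{x_k}g_{ab}\big)p_m=\sum_m\Gamma_{ab}^m p_m,
\]
which is the claimed formula.

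I expect the main obstacle to be purely notational: organizing the three index contractions above and keeping the signs straight when passing between derivatives of $g^{ij}$ and derivatives of $g_{ij}$; there is no conceptual difficulty. As an alternative one could argue invariantly — Proposition \ref{structure} shows that the horizontal space $\hor_\alpha$ is uniquely determined, and one checks that the horizontal distribution of the Levi-Civita connection lifted to $T^*M$ has fibre coefficients $\sum_k\Gamma_{ij}^kp_k$ and satisfies the same defining relation — but the coordinate computation is the shortest route.
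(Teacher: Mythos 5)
Your proposal is correct and takes essentially the same route as the paper's proof: both substitute the natural mechanical Hamiltonian into Proposition \ref{structure}, note that the potential can only enter through $H_{x_k}$, which is multiplied by $H_{p_ip_jp_k}=0$, and then contract the resulting identity with the metric to recognize the Christoffel symbols. The final contraction you describe is exactly the ``rewrite'' step the paper performs.
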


\

\begin{proof}
Assume that the Hamiltonian is given in local coordinates $(x_1,$ $...,x_n,p_1,...,p_n)$ by 
\[
H(x,p)=\frac{1}{2}\sum_{i,j}g^{ij}p_ip_j+U(x),
\]
where $g_{ij}=\left<\partial_{x_i},\partial_{x_j}\right>$ and $g^{ij}$ denotes the inverse matrix of $g_{ij}$. 

By Proposition \ref{structure}, the structure constants $c_{ij}$ is given by 
\[
2\sum_{s,r}g^{ir}c_{rs}g^{sj}=\sum_{k,l} \left(g^{kl}\frac{\partial g^{ij}}{\partial{x_k}}p_l - g^{kj}\frac{\partial g^{li}}{\partial x_k}p_l-g^{ik}\frac{\partial g^{lj}}{\partial x_k}p_l\right). 
\]

If we rewrite the above equation, then we get the following as claimed. 
\[
c_{rs}=\frac{1}{2}\sum_{i,j} g^{ij}p_j\left(-\frac{\partial g_{rs}}{\partial{x_i}} +\frac{\partial g_{ir}}{\partial x_s}+\frac{\partial g_{is}}{\partial x_r}\right)=\sum_j \Gamma^j_{rs}p_j. 
\]

\end{proof}

For each point $\alpha$ in the cotangent bundle, we can identify the cotangent space $T^*_xM$ with the vertical space $\ver_\alpha$ by 
\begin{equation}\label{verl}
\bar\alpha\mapsto \bar\alpha^\ver:=\ddtz(\alpha+t\bar\alpha).
\end{equation}
We call $\bar\alpha^\ver$ the vertical lift of $\bar\alpha$ at the point $\alpha$. We also recall that $I:TM\to T^*M$ is the identification of the tangent and the cotangent bundle induced by the Riemannian metric. Under these identifications, the curvature $R^H$ defined above and the Riemannian curvature $\RC$  are related as follows. 

\

\begin{prop}\label{curvaturecharN}
Assume that the Hamiltonian $H$ is natural mechanical. Then the curvature $R^H$ of the Hamiltonian $H$ is given by 
\[
R^H_{Iu}((Iv)^\ver)=(I(\RC(u,v)u+\hess\, U(v)))^\ver,
\]
where $\RC$ is the Riemannian curvature and $\hess$ denotes the Hessian with respect to the given Riemannian metric. 
\end{prop}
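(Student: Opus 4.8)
The plan is to compute $R^H$ directly from the coordinate-free characterization in Proposition \ref{curvaturechar}, namely $R^H_\alpha(V(\alpha)) = -[\vec H,[\vec H,V]_\hor]_\ver(\alpha)$, where $V$ is any vertical extension of the given vertical vector. First I would fix a point $x\in M$ and work in normal coordinates $(x_1,\dots,x_n)$ centered at $x$, so that $g_{ij}(x)=\delta_{ij}$ and $\partial_{x_k}g_{ij}(x)=0$, hence $\Gamma_{ij}^k(x)=0$; by Proposition \ref{structureconstant} this gives $c_{ij}(\alpha)=\sum_k\Gamma_{ij}^k(x)p_k=0$ at the base point, so that the horizontal lift at $\alpha$ is simply $v^\hor=\sum v_i\partial_{x_i}$ there, while its derivatives involve $\partial_{x_m}\Gamma_{ij}^k$, which encode the Riemannian curvature. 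Fix a vector $v\in T_xM$ and extend $(Iv)^\ver$ to the vertical vector field $V=\sum_j v_j\,\partial_{p_j}$ (constant coefficients in these coordinates, identifying $Iv$ with $\sum v_j dx_j$ since $g_{ij}(x)=\delta_{ij}$).

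The computation then proceeds in three brackets. One computes $[\vec H,V]$ using $\vec H=\sum_i(H_{p_i}\partial_{x_i}-H_{x_i}\partial_{p_i})$ with $H=\frac12\sum g^{ij}p_ip_j+U$; taking the horizontal part $[\vec H,V]_\hor$ means projecting onto $\hor_\alpha=\mathrm{span}\{f^1(0),\dots,f^n(0)\}$, which by the splitting $T_\alpha T^*M=\ver_\alpha\oplus\hor_\alpha$ is spanned by vectors of the form $\partial_{x_i}+\sum_j c_{ij}\partial_{p_j}$; at the base point in normal coordinates this is just $\partial_{x_i}$, so the horizontal projection kills the $\partial_{p}$-components. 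Then one brackets once more with $\vec H$ and extracts the vertical part $[\,\cdot\,]_\ver$, i.e. the $\partial_{p}$-component. Along the way the potential contributes through the $H_{x_i}=U_{x_i}+\frac12(\partial_{x_i}g^{kl})p_kp_l$ terms: the $U$-part produces $\sum_{k}U_{x_jx_k}v_k$, i.e. $\hess U(v)$, and the metric part produces the curvature tensor contracted as $\RC(u,v)u$ with $u=I^{-1}\alpha$, exactly as in the analogous Riemannian computation (this is essentially the statement that the Hamiltonian flow of the kinetic energy is the geodesic flow and its Jacobi equation is $\ddot J + \RC(J,\dot\gamma)\dot\gamma=0$). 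Care with the ordering $\RC(u,v)u$ versus sign conventions, and with the $\frac12$'s in $H$, is the bookkeeping one must get right.

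The main obstacle I anticipate is the separation of the horizontal projection from the naive coordinate projection onto $\partial_{x_i}$: although $c_{ij}(\alpha)=0$ at the base point in normal coordinates, the \emph{derivatives} $\partial_{x_m}c_{ij}=\sum_k(\partial_{x_m}\Gamma_{ij}^k)p_k$ do not vanish, and the outer bracket $[\vec H,\,\cdot\,]$ differentiates the horizontal frame, so these derivative terms re-enter and are precisely what builds the curvature tensor. One has to be disciplined about the fact that Proposition \ref{curvaturechar} only requires the \emph{values} of the bracket fields at $\alpha$ to compute $R^H_\alpha$, but computing those bracket values requires knowing $[\vec H,V]_\hor$ as a vector field near $\alpha$, hence its first-order behavior, hence the first derivatives of $c_{ij}$. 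Once this is organized, matching $\sum_{k,l,m}(\partial_{x_m}\Gamma_{ij}^k)\cdots$ against the normal-coordinate formula $R^i{}_{jkl}=\partial_k\Gamma^i_{lj}-\partial_l\Gamma^i_{kj}$ for the Riemann tensor yields the claimed identity, and one records the answer invariantly as $R^H_{Iu}((Iv)^\ver)=(I(\RC(u,v)u+\hess U(v)))^\ver$.
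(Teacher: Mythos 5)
Your strategy is sound and would work, but it is a genuinely different (and heavier) route than the paper's. The paper splits $H=E+U$ into kinetic and potential parts, observes that $\vec H=\vec E-(dU)^\ver$ and that $[(dU)^\ver,V]$ is vertical for vertical $V$, and that by Proposition \ref{structureconstant} the horizontal spaces of $H$ and $E$ coincide (the structure constants $c_{ij}=\sum_k\Gamma^k_{ij}p_k$ do not see $U$). This reduces the double bracket to $R^H(V)=R^E(V)+[(dU)^\ver,[\vec E,V]_\hor]_\ver$; the kinetic term $R^E_{Iu}((Iv)^\ver)=(I\RC(u,v)u)^\ver$ is then simply quoted from Agrachev--Gamkrelidze, and only the short potential computation $-[(dU)^\ver,X^\hor]_\ver=(I\hess U(X))^\ver$ is done by hand. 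You instead propose to compute both contributions from scratch in normal coordinates. Your plan correctly identifies every subtlety that matters --- in particular that the horizontal projection must be tracked as a vector field near $\alpha$ because the outer bracket differentiates the $c_{ij}$, and that $U$ enters only through the outer bracket since $U_{x_i}$ is $p$-independent (this is the coordinate shadow of the paper's observation that $[(dU)^\ver,V]$ is vertical). What your approach buys is self-containedness: you would not need to cite the external result for $R^E$. What it costs is that the hardest step --- assembling $\partial_m\Gamma^k_{ij}$ and the second derivatives of $g^{ij}$ into $\RC(u,v)u$ --- is exactly the content of the cited theorem, and in your write-up it is asserted (``exactly as in the analogous Riemannian computation'') rather than carried out; to make the proof complete you would have to actually do that bookkeeping, whereas the paper's decomposition lets it be outsourced entirely.
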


\

\begin{proof}
Let $E:T^*M\to\Real$ be the kinetic energy Hamiltonian defined by 
\[
E(\alpha)=\frac{1}{2}|I^{-1}(\alpha)|^2. 
\]

Since $H=E+U$, we have 
\[
\vec H=\vec E-(dU)^\ver.
\]

Let $V$ be a vertical vector field. Since $[(dU)^\ver,V]$ is vertical, it follows from Proposition \ref{curvaturechar} that
\[
R^H(V)=-[\vec E-(dU)^\ver,[\vec E,V]_\hor]_\ver.
\]

By Proposition \ref{structureconstant}, the horizontal spaces of the Hamiltonians $H$ and $E$ coincide. It follows that 
\[
R^H(V)=R^E(V)+[(dU)^\ver,[\vec E,V]_\hor]_\ver.
\]

By \cite[Theorem 5.1]{AgGa}, 
\[
R^E_{Iu}((Iv)^\ver)=(I(\RC(u,v)u))^\ver. 
\]

Therefore, it remains to show that 
\[
[(dU)^\ver,[\vec E,(IX)^\ver]_\hor]_\ver=(I(\hess\, U(X)))^\ver. 
\]

A calculation using local coordinates shows that 
\[
d\pi([\vec E,(IX)^\ver])=-X.
\]

Therefore,
\[
[(dU)^\ver,[\vec E,(IX)^\ver]_\hor]_\ver=-[(dU)^\ver,X^\hor]_\ver=(I\hess\,U(X))^\ver.
\]
\end{proof}
\bigskip

\section{The Main Result}\label{mainS}

In this section, we will state and prove the main result (Theorem \ref{main}). Before that, we need a few definitions. 

Let $f:M\to\Real$ be a $C^2$ function. Its differential defines a map $P:x\mapsto df_x$ from the manifold $M$ to the cotangent bundle $T^*M$. Therefore, the differential $dP:TM\to TT^*M$ of this map sends each tangent space $T_xM$ to a $n$ dimensional Lagrangian subspace $dP(T_xM)$ of the tangent space $T_{df_x} T^*M=\ver_{df_x}\oplus\hor_{df_x}$ (see (\ref{verspace}) and (\ref{horspace}) for the definition of the vertical space $\ver_\alpha$ and the horizontal space $\hor_\alpha$, respectively). Therefore, $dP(T_xM)$ is the graph of a linear map $\hess^H f:\hor_{df_x}\to\ver_{df_x}$. We call this map the $H$-Hessian of $f$. When the Hamiltonian is natural mechanical, the $H$-Hessian coincides with the usual Hessian defined using the given Riemannian metric (see Proposition \ref{hess}). Let $e^1(t),...,e^n(t),f^1(t),...,f^n(t)$ be a canonical frame at the point $\alpha$ (see Proposition \ref{Darboux} for the definition) and let $\mathcal S$ be the matrix representation of the $H$-Hessian defined by 
\[
\hess^H f(f^i(0))=\sum_{i=1}^n\mathcal S_{ij}e^j(0).
\]
Finally we can state the main result. 

\

\begin{thm}\label{main}
Assume that the curvature $R^H$ corresponding to the Hamiltonian $H$ satisfies $R^H_\alpha\leq kI$ for some constant $k$ and all points $\alpha$ in the cotangent bundle $T^*M$. Assume that $f$ is a $C^2$ function which satisfies 
\[
\mathcal S>\begin{cases}
-\sqrt{|k|}\coth(\sqrt{|k|})I & \text{if } k<0 \\
-I & \text{if } k=0 \\
-\sqrt{|k|}\cot(\sqrt{|k|})I & \text{if } k>0. 
\end{cases}
\]
Then $f$ is $c$-convex and the $C^1$ map $\varphi(x):=\pi(\phi_1(df_x))$ is the unique optimal map pushing any Borel probability measure $\mu$ forward to $\varphi_*\mu$. 
\end{thm}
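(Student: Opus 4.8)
The plan is to show that the map $\varphi(x) = \pi(\phi_1(df_x))$ is a diffeomorphism onto its image and that $f$ is $c$-convex by constructing the dual potential $\bar f$ explicitly and verifying the supremum defining $c$-convexity is attained uniquely at $y = \varphi(x)$. The underlying geometric idea is that the curve $J_{df_x}(t)$ — the Jacobi curve — governs the conjugate-point behaviour of the Hamiltonian flow, and the hypothesis on $\mathcal S$ is exactly the condition that the Lagrangian subspace $d\phi_t(dP(T_xM))$ stays transverse to the vertical subspace $\ver_{\phi_t(df_x)}$ for all $t \in [0,1]$. Transversality for all $t$ implies both that $d\varphi$ is nondegenerate (so $\varphi$ is a local diffeomorphism, and in fact $C^1$) and, via the standard argument of Bernard–Buffoni together with the second-variation / index-form inequality, that the geodesic-like extremal $t \mapsto \pi(\phi_t(df_x))$ is the unique action-minimizer between $x$ and $\varphi(x)$, which gives $c$-convexity with $\bar f(y)$ taken to be (the appropriate $c$-transform expression at) $y$.

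First I would set up the comparison argument. Fix $\alpha = df_x$ and write $L_\alpha(t) := d\phi_t(dP(T_xM)) \subseteq T_{\phi_t(\alpha)}T^*M$; this is a curve of Lagrangian subspaces whose "initial slope" relative to the canonical frame is $\mathcal S$. Using Proposition \ref{canonical}, the evolution of $L_\alpha(t)$ in canonical coordinates is governed by the Jacobi-type ODE $\ddot S(t) = -R_\alpha(t) - S(t)^2$ (matrix Riccati form) with $S(0) = \mathcal S$, where I write $L_\alpha(t)$ as the graph $\{(S(t)w, w)\}$ in the $e,f$ splitting. The hypothesis $R^H_\alpha \le kI$ for all $\alpha$, combined with \eqref{canonicalR}, gives $R_\alpha(t) \le kI$ along the whole trajectory. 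A Riccati comparison theorem then shows that if $S(0) = \mathcal S$ dominates the scalar solution $\sigma(t)$ of $\dot\sigma = -k - \sigma^2$ with the critical initial condition (namely $\sigma(0) = -\sqrt{|k|}\coth\sqrt{|k|}$, $-1$, or $-\sqrt{|k|}\cot\sqrt{|k|}$ in the three cases — these are precisely the values for which $\sigma$ is defined on $[0,1]$ and $\sigma(1)$ would be at the blow-up threshold), then $S(t)$ exists and stays finite on all of $[0,1]$. Finiteness of $S(t)$ on $[0,1]$ is equivalent to $L_\alpha(t) \cap \ver_{\phi_t(\alpha)} = \{0\}$ for $t \in [0,1]$, i.e. no conjugate points.

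Next I would use this transversality to conclude the three required facts. Transversality at $t=1$ means $d\varphi_x = d(\pi \circ \phi_1 \circ df)_x$ is invertible, so $\varphi$ is a $C^1$ local diffeomorphism. Transversality for all $t\in[0,1]$ means the extremal $\gamma_x(t) := \pi(\phi_t(df_x))$ is a nondegenerate minimizer of the action among nearby curves, and in fact the associated index form is positive definite; the classical globalization argument (as in Bernard–Buffoni, McCann) upgrades this to: $c(x, \varphi(x)) = \int_0^1 L(\gamma_x, \dot\gamma_x)\,dt$ and this minimizer is unique, moreover $f(x') \ge f(x) + [\text{expression}] $ with equality at $x'=x$, which is exactly the statement that setting $\bar f(y) := \inf_{x'}[f(x') + c(x',y)]$ one recovers $f(x) = \sup_y[\bar f(y) - c(x,y)]$ with the sup attained at $y = \varphi(x)$. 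This gives $c$-convexity of $f$. Finally, $c$-convexity of $f$ plus $\varphi = \pi\circ\phi_1\circ df$ plus injectivity (which follows because two distinct optimal extremals cannot cross, again from the no-conjugate-point condition applied globally) places us exactly in the hypotheses of Theorem \ref{BeBu}: $\varphi$ is then the unique optimal map from any Borel probability $\mu$ to $\varphi_*\mu$.

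I expect the main obstacle to be the passage from the pointwise no-conjugate-point / Riccati-finiteness statement to the \emph{global} optimality and uniqueness of the extremal $\gamma_x$ — i.e. showing that $f$ is genuinely $c$-convex and not merely that $\varphi$ is locally a nice map. Locally, positivity of the index form gives only local minimality of each extremal; one must rule out a shorter competitor curve wrapping around the compact manifold $M$, and one must show the family $\{\gamma_x\}$ foliates without crossings so that $\varphi$ is injective. The clean way to handle this is to reduce to Theorem \ref{BeBu} by exhibiting the explicit $c$-convex potential: define $\bar f := f^c$ (the $c$-transform), show $\bar f^c = f$ using that for each $x$ the map $y \mapsto \bar f(y) - c(x,y)$ has a critical point at $\varphi(x)$ coming from $df_x = -\partial_x c(x,\varphi(x))$, and that this critical point is a strict global max because $x \mapsto \pi(\phi_1(df_x))$ is a diffeomorphism with the extremal fields non-crossing — the non-crossing being the one genuinely global input, provable by noting that a crossing of two minimizing extremals before time $1$ would force a conjugate point, contradicting the Riccati bound. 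Everything else — the Riccati comparison, the identification of the $H$-Hessian with $\mathcal S$, the specialization to Theorems \ref{natural}–\ref{2d} — is routine once this global step is in place.
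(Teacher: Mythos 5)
Your proposal follows the paper's proof in its essentials: a Riccati comparison along the Jacobi curve (the paper compares the matrix solution $S_t$ with initial condition $\mathcal S$ against the explicit constant-coefficient solution of Levin's formula, which is equivalent to your comparison with the critical scalar solution blowing up at $t=1$) yields transversality of $d\phi_t(dP(T_xM))$ to the vertical for all $t\in[0,1]$, hence $\varphi_t$ is a diffeomorphism, and the global step is then the field-of-extremals argument. The paper makes your ``globalization'' precise via the method of characteristics: the family $\varphi_t$ of diffeomorphisms produces a $C^2$ Hamilton--Jacobi solution $f_t$ calibrating the extremals, so $f_1(y)-f(x)\le c(x,y)$ with equality at $y=\varphi_1(x)$, giving $c$-convexity with $\bar f=f_1$ and optimality by Kantorovich duality; this sidesteps your claim that a crossing of two extremals would force a conjugate point (which is not immediate --- injectivity of each $\varphi_t$ instead comes from the local-diffeomorphism property plus compactness of $M$ and a degree argument).
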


\

Before giving the proof of Theorem \ref{main}, we note that Theorem \ref{natural} is an immediate consequence of Proposition \ref{curvaturecharN}, Theorem \ref{main}, and the following two results (Proposition \ref{Rframe} and Proposition \ref{hess}). 

\

\begin{prop}\label{Rframe}
Assume that the Hamiltonian is natural mechanical. Let $v_1,...,v_n$ be an orthonormal frame of the tangent space $T_xM$. Then there is a unique canonical frame $\{e^1(t),...,e^n(t),f^1(t),...,f^n(t)\}$ satisfying 
\[
e^i(0)=(Iv_i)^\ver , \quad f^i(0)=v_i^\hor,\quad i=1,...,n.
\]
\end{prop}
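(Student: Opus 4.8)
The plan is to produce the canonical frame by solving the defining ODE system of Proposition \ref{canonical} with prescribed initial data and then verifying the Darboux conditions are preserved in time. First I would set $e^i(0) := (Iv_i)^\ver$ and $f^i(0) := v_i^\hor$ and check that this collection is a Darboux basis of $T_\alpha T^*M$ at $t=0$, where $\alpha = Iv$ for the relevant base point; this is where I use that $\ver_\alpha$ and $\hor_\alpha$ are Lagrangian (so $\omega(e^i,e^j)=0$ and $\omega(f^i,f^j)=0$), together with the identification $d\pi:\hor_\alpha \to T_xM$ and the fact that $B^0_\alpha$ pairs the vertical lifts orthonormally precisely when the $v_i$ are orthonormal. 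Here I would invoke Proposition \ref{canonicalbilinearform}, which gives $B^0_\alpha(\partial_{p_i},\partial_{p_j}) = H_{p_ip_j} = g^{ij}$ in natural coordinates, so that $B^0_\alpha((Iv_i)^\ver,(Iv_j)^\ver) = \langle v_i,v_j\rangle = \delta_{ij}$. The remaining cross-pairing $\omega(e^i(0),f^j(0)) = \delta_{ij}$ should be a direct computation using $\omega = \sum dp_k\wedge dx_k$ and the coordinate expressions for the vertical and horizontal lifts (the latter given in terms of the structure constants $c_{ij}$, with the symmetry $c_{ij}=c_{ji}$ from Proposition \ref{structureconstant} ensuring $\omega(f^i(0),f^j(0))=0$).

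Next I would define $e^i(t)$ by flowing the vertical lifts along the Jacobi curve: concretely, $e^i(t)$ is the unique curve in $J_\alpha(t)$ with $e^i(0)=(Iv_i)^\ver$ and $-\dot e^i(t) \in \hor_\alpha$ at $t=0$, extended by the linear ODE $\dot e^i = -f^i$, $\dot f^i = R_\alpha(t) e^i$ from Proposition \ref{canonical}. Existence and uniqueness of the solution is immediate from linear ODE theory once the initial data $(e^i(0), f^i(0))$ is fixed. The substance is to show that this solution is in fact \emph{a canonical frame}, i.e. that the $e^i(t)$ remain a $B^t_\alpha$-orthonormal basis of $J_\alpha(t)$ and that the Darboux relations persist. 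For this I would compute the time derivatives of $\omega(e^i(t),e^j(t))$, $\omega(e^i(t),f^j(t))$, and $\omega(f^i(t),f^j(t))$ using $\dot e^i=-f^i$, $\dot f^i=R_\alpha e^i$, and the symmetry of $R_\alpha(t)$ with respect to $B^t_\alpha$ (equivalently, since $B^t_\alpha(u,v) = -\omega(u,\dot{\bar v}) = \omega(u, f\text{-part})$, symmetry of $R$ translates into $\omega(R e^i, e^j) = \omega(e^i, R e^j)$). Each of these derivatives should vanish identically, so the Darboux conditions — being satisfied at $t=0$ by the first paragraph — hold for all $t$. This is essentially the content already packaged in Proposition \ref{Darboux}; what is new here is the \emph{uniqueness} claim, which follows because any canonical frame with the prescribed value at $t=0$ satisfies the same linear ODE with the same initial condition, and the rigid-motion ambiguity $\mathcal O$ in Proposition \ref{Darboux} is killed once $e^i(0)$ is pinned down (an orthogonal matrix fixing a basis pointwise is the identity).

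The main obstacle, I expect, is the bookkeeping identifying $f^i(0) = v_i^\hor$ as the correct partner of $e^i(0) = (Iv_i)^\ver$ in a \emph{canonical} frame — that is, confirming that with $e^i(0)$ chosen as a vertical lift, the forced value $f^i(0) = -\dot e^i(0)$ under the canonical-frame ODE genuinely equals the horizontal lift $v_i^\hor$ as defined via \eqref{horspace}, rather than some other Lagrangian complement. One must check that the horizontal space determined by the canonical frame with these initial vertical vectors coincides with the $\hor_\alpha$ attached to the natural-mechanical structure; for this I would use Proposition \ref{structure} (or its natural-mechanical specialization Proposition \ref{structureconstant}) to match the coordinate formula for $v^\hor$ against $-\dot e^i(0)$ computed from the ODE, which reduces to a consistency check between the $c_{ij}$ appearing in the horizontal lift and the Hamiltonian derivatives $H_{p_ip_jx_k}$ etc.\ governing $\dot e$. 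Once that identification is in hand, everything else is formal. I would present the argument by first treating $t=0$ (Darboux check and orthonormality), then invoking linear ODE existence/uniqueness for the extension, then the conservation-law computation for the three symplectic pairings, and finally deducing uniqueness from the rigidity statement in Proposition \ref{Darboux}.
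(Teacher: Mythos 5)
Your proposal is correct and follows essentially the same route as the paper: $B^0_\alpha$-orthonormality of the $(Iv_i)^\ver$ (Proposition \ref{canonicalbilinearform}) together with the existence-and-rigidity statement of Proposition \ref{Darboux} produces the unique canonical frame with $e^i(0)=(Iv_i)^\ver$, and the coordinate computation $\omega((Iv)^\ver,w^\hor)=\langle v,w\rangle$ combined with the Darboux relations forces $f^i(0)=v_i^\hor$. The ``main obstacle'' you flag in your last paragraph is not actually one: $\hor_\alpha$ is \emph{defined} in (\ref{horspace}) as the span of the $f^i(0)$ of any canonical frame, so $v_i^\hor$ and $f^i(0)$ already lie in the same Lagrangian subspace and the nondegenerate pairing with the $e^j(0)$ identifies them at once --- no matching of structure constants against $-\dot e^i(0)$, and no re-derivation of the ODE propagation of the Darboux conditions, is required.
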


\

\begin{proof}[Proof of Proposition \ref{Rframe}]
Since $v_1,...,v_n$ is orthonormal, we have, by Proposition \ref{canonicalbilinearform}, 
\[
B^0_\alpha((Iv_i)^\ver,(Iv_j)^\ver)=\delta_{ij}. 
\]

It follows from Proposition \ref{Darboux} that there exists a unique canonical frame $\{e^1(t),...,e^n(t),f^1(t),...,f^n(t)\}$ such that $e^i(0)=(Iv_i)^\ver$. 

Let $(x_1,...,x_n,p_1,...,p_n)$ be a local coordinate system of the cotangent bundle $T^*M$ around the point $\alpha$. Assume that the vectors $v$ and $w$ are given in this coordinate system by $v=\sum_{i=1}^nv_i\partial_{x_i}$ and $w=\sum_{i=1}^nw_i\partial_{x_i}$, respectively. Then  
\[
(Iv)^\ver=\sum_{i=1}^ng_{ij}v_idx_j,\quad w^\hor=\sum_{i=1}^nw_i\left(\partial_{x_i}+\sum_{j=1}^nc_{ij}\partial_{p_j}\right),
\]
where $g_{ij}=\left<v_i,v_j\right>$. 

It follows that 
\[
\omega(v^\ver,w^\hor)=\sum_{i,j=1}^ng_{ij}v_iw_j=\left<v,w\right>. 
\]

Therefore, by the definition of $e^i(0)$ and the fact that $v_1,...,v_n$ is an orthonormal family, we have 
\[
\omega(e^i(0),v_j^\hor)=\omega((Iv_i)^\ver,v_j^\hor)=\delta_{ij}.
\]

Finally since $e^1(0),...,e^n(0),f^1(0),...,f^n(0)$ is a Darboux basis,  we have $v_j^\hor=f^j(0)$ as claimed. 
\end{proof}

\

\begin{prop}\label{hess}
Assume that the Hamiltonian is natural mechanical. Then 
\[
\hess^Hf(v^\hor)=(I(\hess\, f(v)))^\ver. 
\]
\end{prop}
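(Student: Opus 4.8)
The plan is to compute both sides of the claimed identity in a local coordinate system of the cotangent bundle and check they agree. Recall that $\hess^H f$ is defined as the linear map $\hor_{df_x}\to\ver_{df_x}$ whose graph is $dP(T_xM)$, where $P:x\mapsto df_x$. So first I would write $P$ in the induced coordinates $(x_1,\dots,x_n,p_1,\dots,p_n)$: it is the map $x\mapsto\bigl(x,\tfrac{\partial f}{\partial x}(x)\bigr)$, so for $v=\sum_i v_i\partial_{x_i}\in T_xM$ we get $dP(v)=\sum_i v_i\bigl(\partial_{x_i}+\sum_j f_{x_ix_j}\partial_{p_j}\bigr)$, where $f_{x_ix_j}=\frac{\partial^2 f}{\partial x_i\partial x_j}$.

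Next I would use Proposition \ref{structureconstant}, which in the natural mechanical case gives the horizontal lift of $v$ as $v^\hor=\sum_i v_i\bigl(\partial_{x_i}+\sum_j(\sum_k\Gamma_{ij}^k p_k)\partial_{p_j}\bigr)$, evaluated at the point $\alpha=df_x$, i.e.\ with $p_k=f_{x_k}$. The $\hor\to\ver$ map sends $v^\hor$ to the vertical vector $dP(v)-v^\hor=\sum_i v_i\sum_j\bigl(f_{x_ix_j}-\sum_k\Gamma_{ij}^k f_{x_k}\bigr)\partial_{p_j}$. The coefficient $f_{x_ix_j}-\sum_k\Gamma_{ij}^k f_{x_k}$ is exactly the covariant Hessian $(\hess f)_{ij}=(\nabla df)_{ij}$ in these coordinates. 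Finally, matching against the vertical-lift identification in (\ref{verl}): the vertical lift of the covector $\bar\alpha=\sum_j\bar\alpha_j dx_j$ at $\alpha$ is $\sum_j\bar\alpha_j\partial_{p_j}$, and $I(\hess f(v))$ is the covector with components $\sum_i g_{jl}(g^{lm}(\hess f)_{mi}v_i)=\sum_i(\hess f)_{ji}v_i$ — wait, more directly, $I$ applied to the vector $\hess f(v)$ with components $(\hess f)^m{}_i v_i = g^{mk}(\hess f)_{ki}v_i$ gives the covector with components $g_{jm}g^{mk}(\hess f)_{ki}v_i=(\hess f)_{ji}v_i$, which is precisely the $\partial_{p_j}$-coefficient computed above. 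Hence $\hess^H f(v^\hor)=(I(\hess f(v)))^\ver$.

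The only real subtlety — and the step I expect to need the most care — is bookkeeping the two ``raisings and lowerings'': the map $\hess f:T_xM\to T_xM$ carries the $(1,1)$-tensor form of the covariant Hessian, and composing with $I$ lowers the index back down, so the net effect is that $(I(\hess f(v)))^\ver$ has $\partial_{p_j}$-components equal to the $(0,2)$ Hessian $(\nabla df)_{ji}v_i$; one must check this agrees with $f_{x_ix_j}-\Gamma_{ij}^k f_{x_k}$ rather than off by a metric factor. The symmetry of $\Gamma_{ij}^k$ in $i,j$ and of $f_{x_ix_j}$ guarantees the resulting matrix is symmetric, as it must be since both $\ver$ and $\hor$ are Lagrangian (so the graph of a Lagrangian subspace is the graph of a symmetric map with respect to the canonical pairing). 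Once the coordinate identification of $\nabla df$ with $f_{x_ix_j}-\Gamma_{ij}^k f_{x_k}$ is invoked, the proof is a one-line comparison of coefficients.
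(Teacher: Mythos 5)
Your computation is correct and is essentially the paper's own proof: both express $dP(v)$ and the horizontal lift $v^\hor$ in induced coordinates via Proposition \ref{structureconstant}, read off the vertical difference $\sum_{i,j}\bigl(f_{x_ix_j}-\sum_k\Gamma_{ij}^k p_k\bigr)v_i\partial_{p_j}$, and identify it with $(I(\hess f(v)))^\ver$. Your extra care with the raising/lowering of indices only makes explicit what the paper leaves as a one-line identification.
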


\

\begin{proof}[Proof of Proposition \ref{hess}]
Let $(x_1,...,x_n,p_1,...,p_n)$ be a local coordinate system and recall that $c_{ij}$ denotes the structure constants. Suppose $v$ is given, in this coordinate system, by $v=\sum v_i\partial_{x_i}$. By Proposition \ref{structureconstant},  its horizontal lift is given by
\[
v^\hor=\sum_{i} v_i\left(\partial_{x_i}+\sum_{k,j}\Gamma_{ij}^kp_k\partial_{p_j}\right),
\]
where $\Gamma_{ij}^k$ denotes the Christoffel symbols. 

In the above coordinate system, the differential $df$ is given by $\Big(x_1,...,$ $x_n,\frac{\partial f}{\partial x_1},...,\frac{\partial f}{\partial x_n}\Big)$. It follows from the definition of $H$-Hessian that 
\[
\hess^H f(v^\hor)=\sum_{i,j}\left(\frac{\partial^2 f}{\partial x_i\partial x_j}-\sum_k\Gamma_{ij}^k p_k\right)v_i\partial_{p_j}=(I(\hess\, f(v)))^\ver. 
\]
\end{proof}

\begin{proof}[Proof of Theorem \ref{main}]
Recall that $\phi_t$ denotes the flow of the Hamiltonian vector field $\vec H$. We define the maps $\Phi_t:M\to T^*M$ and $\varphi_t:M\to M$ by  
\[
\Phi_t(x)=\phi_t(df_x),\quad \varphi_t=\pi\circ\Phi_t. 
\]

The image $\Lambda_t$ of the map $d\Phi_t:T_xM\to T_{\Phi_t(x)} T^*M$ defined a Lagrangian subspace of the symplectic vector space $T_{\Phi_t(x)} T^*M$.  

Let 
\[
e^1(t),...,e^n(t),f^1(t),...,f^n(t)
\] 
be a canonical frame at the point $\Phi_0(x)$. Let $a_t$ and $b_t$ be two family of matrices defines by  
\begin{equation}\label{main1}
d\Phi_0(d\pi(f^i(0)))=\sum_{j=1}^n(a^{ij}_tf^j(t)+b^{ij}_te^j(t)).
\end{equation}

If we let $F_t=(f_1(t),...,f_n(t))^T$ and $E_t=(e_1(t),...,e_n(t))^T$, then (\ref{main1}) implies that $a_tF_t+b_tE_t$ is constant in time $t$. By Proposition \ref{canonical}, if we differentiate the above equation with respect to $t$, then we get 
\[
0=\dot a_tF_t+a_t\dot F_t+\dot b_tE_t+b_t\dot E_t=\dot a_tF_t+a_t\bar R_tE_t+\dot b_tE_t-b_tF_t, 
\]
where $\bar R_t$ denotes the matrix given by 
\begin{equation}\label{Rt}
R_t(e^i(t))=\sum_{j=1}^n(\bar R_t)_{ij}e^j(t). 
\end{equation}

It follows that 
\begin{equation}\label{R1}
\dot a_t=b_t, \quad \dot b_t=-a_t\bar R_t. 
\end{equation}

Note that the Lagrangian subspace $\Lambda_0$, and hence $\Lambda_t$ for all small enough $t$, is transversal to the vertical space $\ver_{\Phi_t(x)}$. This, in turn, is equivalent to the matrix $a_t$ being invertible for all such $t$. 
We claim that $\Lambda_t$ is transversal for all time $t$ in the interval $[0,1]$. In other words, we need to show that $a_t$ is invertible for all $t$ in $[0,1]$. If we let 
\begin{equation}\label{R2}
S_t:=a_t^{-1}b_t,
\end{equation}
then it is enough to proof that $S_t$ is bounded for all time $t$ in $[0,1]$. 

From (\ref{main1}), we have that 
\[
f^i(0)+\hess^H f(f^i(0))=\sum_{j=1}^n(a^{ij}_0f^j(0)+b^{ij}_0e^j(0)).
\]

It follows that $a_0=I$ and $\hess^H f(f^i(0))=\sum_{j=1}^nb^{ij}_0e^j(0)$. Therefore, $S_0$ satisfies the initial condition 
\[
S_0=a_0^{-1}b_0=\mathcal S. 
\] 

By (\ref{R1}) and (\ref{R2}), the matrix $S_t$ satisfies the following Riccati equation 
\[
\dot S_t+S_t^2+\bar R_t=0. 
\]

Note that the matrix $S_t$ is bounded above. For the lower bound, we need the following comparison principle of matrix Riccati equations. We denote the transpose of a matrix $B$ by $B^T$. 

\

\begin{thm}\cite{Ro}\label{compare}
Let $S^i_t$ be the solutions of the matrix Riccati equations 
\[
\dot S^i_t=A^i_t+B^i_tS^i_t+S^i_t(B^i_t)^T + S^i_tC^i_tS^i_t,\quad i=1,2.
\]
Assume that 
\[
\left(\begin{array}{cc}
           A^1_t & B^1_t \\
           (B^1_t)^T & C^1_t \\
         \end{array}
       \right)\leq \left(\begin{array}{cc}
           A^2_t & B^2_t \\
           (B^2_t)^T & C^2_t \\
         \end{array}
       \right) \quad \text{and} \quad S^1_0< S^2_0. 
\]
Then $S^1_t< S^2_t$. 
\end{thm}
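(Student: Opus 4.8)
To prove Theorem \ref{compare}, the plan is to reduce the matrix Riccati comparison to the monotonicity, in the positive semi-definite order, of a conjugated difference of the two solutions. First I would record that symmetry is preserved: transposing the Riccati equation for $S^i_t$ and using that $A^i_t$ and $C^i_t$ are symmetric shows that $(S^i_t)^T$ satisfies the same equation with the same initial value, hence $(S^i_t)^T = S^i_t$ by uniqueness. Consequently $D_t := S^2_t - S^1_t$ is symmetric, $D_0 > 0$, and the goal is to show $D_t > 0$ throughout the common interval of definition.

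Next I would derive the equation for $D_t$. Writing $S^2_t = S^1_t + D_t$ in the $i=2$ equation and subtracting the $i=1$ equation, the terms linear or quadratic in $D_t$ regroup (using symmetry of $C^2_t$, $S^1_t$, $S^2_t$) as $Q_t D_t + D_t Q_t^T$ with
\[
Q_t := B^2_t + \tfrac12\bigl(S^1_t + S^2_t\bigr)C^2_t ,
\]
while the terms free of $D_t$ sum to
\[
G_t := (A^2_t - A^1_t) + (B^2_t - B^1_t)S^1_t + S^1_t(B^2_t - B^1_t)^T + S^1_t(C^2_t - C^1_t)S^1_t .
\]
Thus $D_t$ obeys the time-dependent Lyapunov equation $\dot D_t = G_t + Q_t D_t + D_t Q_t^T$. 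The block-matrix hypothesis is exactly what makes $G_t \ge 0$: it says that the matrix with blocks $A^2_t-A^1_t$, $B^2_t-B^1_t$, $(B^2_t-B^1_t)^T$, $C^2_t-C^1_t$ is positive semi-definite, and $G_t$ is its congruence by the $2n\times n$ matrix with blocks $I$ and $S^1_t$ (using $(S^1_t)^T = S^1_t$); a congruence of a positive semi-definite matrix is positive semi-definite.

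Finally I would conjugate away the drift. Let $\Phi_t$ solve $\dot\Phi_t = Q_t\Phi_t$, $\Phi_0 = I$; since $S^1_t$ and $S^2_t$ are defined and continuous on the interval, $Q_t$ is continuous there, so $\Phi_t$ exists and is invertible throughout. A direct computation gives
\[
\frac{d}{dt}\bigl(\Phi_t^{-1} D_t\, \Phi_t^{-T}\bigr) = \Phi_t^{-1} G_t\, \Phi_t^{-T} \ge 0 ,
\]
hence $\Phi_t^{-1} D_t \Phi_t^{-T} \ge \Phi_0^{-1} D_0 \Phi_0^{-T} = D_0 > 0$, and congruence by $\Phi_t$ yields $D_t \ge \Phi_t D_0 \Phi_t^T > 0$, i.e. $S^1_t < S^2_t$. (The same argument with $S^1_0 \le S^2_0$ gives the non-strict comparison.)

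The bookkeeping that produces $Q_t$ and $G_t$ and the verification of the displayed derivative identity are routine. The one delicate point — and the reason a naive "first-derivative test at the first time $D_t$ degenerates" argument stalls — is the quadratic term $D_t C^2_t D_t$: because $C^2_t$ is not assumed to have a definite sign it cannot simply be discarded, and the key manoeuvre is to absorb it into the symmetric drift $Q_t = B^2_t + \tfrac12(S^1_t + S^2_t)C^2_t$, turning the Riccati equation for the difference into an honest Lyapunov equation.
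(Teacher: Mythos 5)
Your proof is correct. The paper itself gives no argument for this statement --- it is quoted verbatim from Royden \cite{Ro} --- and your reduction (symmetry of $S^i_t$ by uniqueness, the identity $\dot D_t = G_t + Q_tD_t + D_tQ_t^T$ with $Q_t = B^2_t + \tfrac12(S^1_t+S^2_t)C^2_t$, positivity of $G_t$ as a congruence of the block-matrix difference by $\bigl(\begin{smallmatrix} I \\ S^1_t\end{smallmatrix}\bigr)$, and conjugation by the fundamental solution of $\dot\Phi_t = Q_t\Phi_t$) is exactly the standard proof from that reference; I verified the algebra regrouping the $D_t$-dependent terms, including the absorption of $D_tC^2_tD_t$, and it checks out.
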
 

\

Therefore, by the assumption and Theorem \ref{compare}, if we consider the equation 
\begin{equation}\label{ConstRica}
\dot{\bar{S}}_t+\bar S^2_t+kI=0,  
\end{equation}
then we have 
\begin{equation}\label{comparenow}
\bar S_t\leq S_t.
\end{equation} 

The solution to (\ref{ConstRica}) is given by the following theorem.

\
 
\begin{thm}\cite{Le}\label{explicit}
Let $S_t$ be the solution of the matrix Riccati equation with constant coefficients
\[
\dot S_t=A+BS_t+S_tD + S_tCS_t. 
\]
Let 
$
M_t:=\left(\begin{array}{cc}
           M^1_t & M^2_t \\
           M^3_t & M^4_t \\
         \end{array}
       \right)
       $ be the fundamental solution of the following equation with initial condition $M_0=I$: 
\[
\dot z=\left(\begin{array}{cc}
           B & A \\
           -C & -D \\
         \end{array}
       \right)z. 
\]
Then $S_t=(M_1S_0+M_2)(M_3S_0+M_4)^{-1}$. 
\end{thm}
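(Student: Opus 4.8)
The plan is to realize the matrix Riccati flow as the projection, under the graph construction $(P,Q)\mapsto PQ^{-1}$, of the linear flow $\dot z=\mathcal{A}z$ on $\Real^{2n}$, where $\mathcal{A}$ is the $2n\times 2n$ coefficient matrix written out in the statement. First I would take a solution $z$ of $\dot z=\mathcal{A}z$, write it in blocks as $z(t)=(P_t,Q_t)^{T}$ with $P_t,Q_t$ square $n\times n$ matrices, so that $\dot P_t=BP_t+AQ_t$ and $\dot Q_t=-CP_t-DQ_t$, and set $S_t:=P_tQ_t^{-1}$ on any interval where $Q_t$ is invertible. Differentiating, using the product rule and substituting the two block equations, gives
\[
\dot S_t=\dot P_tQ_t^{-1}-S_t\dot Q_tQ_t^{-1}=(BP_t+AQ_t)Q_t^{-1}+S_t(CP_t+DQ_t)Q_t^{-1}=A+BS_t+S_tD+S_tCS_t,
\]
so $S_t$ solves the prescribed Riccati equation. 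This is the only computation needed and it is routine.

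The substance of the argument is to run this correspondence in the opposite direction, so as to single out \emph{the} Riccati solution with the given initial value $S_0$ and, along the way, obtain the invertibility needed for the closed formula. Starting from the solution $S_t$ of the Riccati equation on its maximal interval of existence, I would define $Q_t$ to be the solution of the \emph{linear} matrix equation $\dot Q_t=-(CS_t+D)Q_t$ with $Q_0=I$; since the coefficient $-(CS_t+D)$ is continuous there, $Q_t$ is invertible on that whole interval. Putting $P_t:=S_tQ_t$ and using the Riccati equation for $S_t$, a one-line check gives $\dot P_t=AQ_t+BP_t$ and $\dot Q_t=-CP_t-DQ_t$, that is, $(P_t,Q_t)^{T}$ solves $\dot z=\mathcal{A}z$ with initial data $(S_0,I)^{T}$. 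By uniqueness for linear systems, $(P_t,Q_t)^{T}=M_t(S_0,I)^{T}$, hence $P_t=M^1_tS_0+M^2_t$ and $Q_t=M^3_tS_0+M^4_t$; in particular $M^3_tS_0+M^4_t=Q_t$ is invertible and
\[
S_t=P_tQ_t^{-1}=(M^1_tS_0+M^2_t)(M^3_tS_0+M^4_t)^{-1},
\]
which is the claimed formula (the matrices $M_i$ in the statement being the blocks $M^i_t$).

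The one delicate point — and I expect it to be essentially the only obstacle — is the invertibility of $M^3_tS_0+M^4_t$: this cannot be read off from $M_0=I$ alone, since that combination could in principle become singular for some $t\in(0,1]$. Building $Q_t$ directly as a fundamental-type solution of a linear equation driven by the already-known $S_t$ removes the difficulty, since it ties the invertibility of that block precisely to the interval on which the Riccati solution $S_t$ exists — which is exactly what is available in the application to Theorem \ref{main}, where the relevant $S_t$ is shown to remain finite on $[0,1]$. Everything else is the standard bookkeeping for graphs of $n$-dimensional subspaces moving under a linear flow.
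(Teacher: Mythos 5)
The paper does not prove this statement at all: it is quoted from Levin's paper \cite{Le} and used as a black box, so there is no in-text argument to compare yours against. Your proof is the standard (and correct) linearization of the matrix Riccati equation: the forward computation $\dot S_t=\dot P_tQ_t^{-1}-S_t\dot Q_tQ_t^{-1}=A+BS_t+S_tD+S_tCS_t$ checks out with the sign conventions of the stated linear system, and your reverse construction --- defining $Q_t$ as the fundamental solution of $\dot Q_t=-(CS_t+D)Q_t$ and setting $P_t=S_tQ_t$ --- correctly pins down the given Riccati solution, establishes the invertibility of $M^3_tS_0+M^4_t$ on the Riccati solution's interval of existence, and yields the closed formula by uniqueness for linear systems. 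Your remark about where the invertibility comes from is exactly the right point to isolate, and it matches how the formula is actually used in the proof of Theorem \ref{main}, where boundedness of $S_t$ on $[0,1]$ is what is being extracted.
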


\

It follows that the solution to the equation (\ref{ConstRica}) with initial condition $\bar S_0=\mathcal S$ is given by 
\[
\bar S_t^k=\Gamma_1(t)(\Gamma_2(t))^{-1},
\]
where 
\[
\Gamma_1(t)=
\begin{cases}
\cosh(\sqrt{|k|} t)\mathcal S+\sqrt{|k|}\sinh(\sqrt{|k|}t)I & \text{if } k<0 \\ 
\mathcal S & \text{if } k=0 \\
\cos(\sqrt{|k|} t)\mathcal S-\sqrt{|k|}\sin(\sqrt{|k|}t)I & \text{if } k>0 
\end{cases}
\]
and
\[
\Gamma_2(t)=
\begin{cases}
\frac{\sinh(\sqrt{|k|} t)}{\sqrt{|k|}}\mathcal S+\cosh(\sqrt{|k|} t)I & \text{if } k<0 \\
t \mathcal S+I & \text{if } k=0 \\
\frac{\sin(\sqrt{|k|} t)}{\sqrt{|k|}}\mathcal S+\cos(\sqrt{|k|} t)I & \text{if } k>0 . 
\end{cases}
\]

Therefore, by (\ref{comparenow}), $S_t$ is bounded for all $t$ in $[0,1]$ if $\Gamma_2(t)>0$. This, in turn, follows from the following 
\begin{equation}\label{main2}
\mathcal S>\begin{cases}
-\sqrt{|k|}\coth(\sqrt{|k|})I & \text{if } k<0 \\
-I & \text{if } k=0 \\
-\sqrt{|k|}\cot(\sqrt{|k|})I & \text{if } k>0 . 
\end{cases}
\end{equation}

This finishes the proof of the claim that $\Gamma_t$ is transversal for all $t$ in the interval $[0,1]$. It follows from the claim and compactness of the manifold $M$ that the map $\varphi_t$ is a diffeomorphism for each $t$ in $[0,1]$.

The following theorem is proved by the method of characteristics (see \cite[Theorem 17.1, Section 17.2]{AgSa} for a proof). 

\

\begin{thm}\label{char}
Assume that $\varphi_t$ is a diffeomorphism for each time $t$ in the interval $[0,1]$. Then the curve 
\[
t\mapsto \varphi_t(x)
\]
is a strict minimizer of the minimization problem in (\ref{cost}) for each point $x$. 

Moreover, there exists $C^2$ solution $f_t$ to the Hamilton-Jacobi equation 
\[
\partial_t f_t+H(x,df_t)=0,\quad  f_0=f
\]
and it satisfies 
\[
\Phi_t(x)=(df_t)_{\varphi_t(x)}. 
\]
\end{thm}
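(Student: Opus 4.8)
The plan is to reconstruct the classical method-of-characteristics proof, using the hypothesis that every $\varphi_t$, $t\in[0,1]$, is a diffeomorphism to turn a Lax--Oleinik type formula into a globally defined $C^2$ function. Since $M$ is compact and $L$ is superlinear, $\vec H$ is complete, so $\Phi_t=\phi_t\circ df$ and $\varphi_t=\pi\circ\Phi_t$ are defined on $[0,1]$; as $df$ is $C^1$ and $\phi_t$ is smooth, $\Phi_t$ is $C^1$, and then by hypothesis $\varphi_t^{-1}$ is defined and $C^1$. For $x_0\in M$ let $\gamma_{x_0}(t):=\varphi_t(x_0)$, an Euler--Lagrange extremal of $L$ carrying momentum $\Phi_t(x_0)$; thus $\dot\gamma_{x_0}(t)=\frac{\partial H}{\partial p}(\Phi_t(x_0))$ and, by Legendre duality, $L(\gamma_{x_0}(t),\dot\gamma_{x_0}(t))=\Phi_t(x_0)(\dot\gamma_{x_0}(t))-H(\Phi_t(x_0))$. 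Put $g_t(x_0):=\int_0^t L(\gamma_{x_0}(s),\dot\gamma_{x_0}(s))\,ds$ and define the candidate $f_t:=(f+g_t)\circ\varphi_t^{-1}$, i.e. $f_t(\varphi_t(x_0))=f(x_0)+g_t(x_0)$; this is the only function compatible with the characteristic system.

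The crucial step is to prove $\Phi_t(x)=(df_t)_{\varphi_t(x)}$ for every $x$. I would argue symplectically via the tautological $1$-form $\theta$ on $T^*M$. Since $\mathcal{L}_{\vec{H}}\theta=d\!\left(\theta(\vec H)-H\right)$ and $\theta(\vec H)-H$ is exactly the function on $T^*M$ restricting to $L(\gamma,\dot\gamma)$ along Hamiltonian trajectories, we get $\phi_t^*\theta-\theta=d\!\left(\int_0^t(\theta(\vec H)-H)\circ\phi_s\,ds\right)$. Pulling back along the section $x\mapsto df_x$, whose own $\theta$-pullback is $df$, gives $\Phi_t^*\theta=d(f+g_t)$. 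Now $\alpha_t:=\Phi_t\circ\varphi_t^{-1}$ is a $C^1$ section of $T^*M$, hence a $1$-form, and every $1$-form $\alpha$ satisfies $\alpha^*\theta=\alpha$; therefore $\alpha_t=\alpha_t^*\theta=(\varphi_t^{-1})^*\Phi_t^*\theta=d\!\left((f+g_t)\circ\varphi_t^{-1}\right)=df_t$. Evaluated at $\varphi_t(x)$ this reads $\Phi_t(x)=(df_t)_{\varphi_t(x)}$; in particular $df_t=\Phi_t\circ\varphi_t^{-1}$ is $C^1$, so $f_t$ is $C^2$.

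The Hamilton--Jacobi equation then drops out: differentiating $f_t(\varphi_t(x_0))=f(x_0)+g_t(x_0)$ in $t$ at fixed $x_0$, the left side is $\partial_t f_t(\varphi_t(x_0))+(df_t)_{\varphi_t(x_0)}(\partial_t\varphi_t(x_0))$ and the right side is $L(\gamma_{x_0},\dot\gamma_{x_0})=\Phi_t(x_0)(\dot\gamma_{x_0})-H(\Phi_t(x_0))$; since $\partial_t\varphi_t(x_0)=\dot\gamma_{x_0}(t)$ and $(df_t)_{\varphi_t(x_0)}=\Phi_t(x_0)$, the pairing terms cancel, leaving $\partial_t f_t(\varphi_t(x_0))=-H(\varphi_t(x_0),(df_t)_{\varphi_t(x_0)})$, and surjectivity of $\varphi_t$ yields $\partial_t f_t+H(x,df_t)=0$, with $f_0=f$ clear. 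For strict minimality I would run the standard calibration: for any absolutely continuous $\eta$ with $\eta(0)=x$, $\eta(1)=\varphi_1(x)$, the Legendre inequality $L(\eta(t),\dot\eta(t))\ge (df_t)_{\eta(t)}(\dot\eta(t))-H(\eta(t),(df_t)_{\eta(t)})$ combined with Hamilton--Jacobi gives $L(\eta(t),\dot\eta(t))\ge\frac{d}{dt}f_t(\eta(t))$, so $\int_0^1 L(\eta,\dot\eta)\,dt\ge f_1(\varphi_1(x))-f(x)$, a quantity depending only on the endpoints and attained when $\eta(t)=\varphi_t(x)$. Strict fibrewise convexity of $H$ makes the Legendre inequality an equality only when $\dot\eta(t)=\frac{\partial H}{\partial p}(\eta(t),(df_t)_{\eta(t)})$, so an equality-achieving $\eta$ is an integral curve through $x$ of the $C^1$ time-dependent field $y\mapsto\frac{\partial H}{\partial p}(y,(df_t)_y)$; since $\varphi_\cdot(x)$ is itself such an integral curve, uniqueness forces $\eta\equiv\varphi_\cdot(x)$.

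The step I expect to be the main obstacle is the second one, identifying the flowed Lagrangian graph $\phi_t(\{df_x\})$ with the graph of $df_t$. The diffeomorphism hypothesis is precisely what guarantees $\phi_t(\{df_x\})$ is a section of $T^*M$; the remaining content, namely that this section is exact with primitive equal to the action, rests on the Poincar\'e--Cartan identity for $\phi_t^*\theta-\theta$ and on carefully tracking regularity ($C^1$ of $\Phi_t$ and $\varphi_t^{-1}$, hence $C^2$ of $f_t$). The rest is bookkeeping with the Legendre transform.
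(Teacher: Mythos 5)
Your argument is correct, and it is essentially the proof the paper intends: the paper does not prove Theorem \ref{char} itself but cites the method-of-characteristics argument of Agrachev--Sachkov (Theorem 17.1), which is exactly your route --- the Poincar\'e--Cartan identity $\phi_t^*\theta-\theta=d\int_0^t(\theta(\vec H)-H)\circ\phi_s\,ds$ pulled back along the section $x\mapsto df_x$, the diffeomorphism hypothesis to invert $\varphi_t$ and obtain the exact section $df_t$, and calibration plus strict fibrewise convexity of $H$ (with uniqueness of integral curves of the $C^1$ field $y\mapsto H_p(y,(df_t)_y)$) for strict minimality. All the steps, including the regularity bookkeeping giving $f_t\in C^2$, check out against the paper's sign conventions ($\omega=\sum dp_i\wedge dx_i=d\theta$, $\iota_{\vec H}\omega=-dH$).
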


\

Finally, we show that $\varphi_1$ is the optimal map between any measure $\mu$ and $(\varphi_1)_*\mu$. Let $\gamma(t)$ be a minimizer of (\ref{cost}) which starts from $\gamma(0)=x$ and ends at $\gamma(1)=y$. Then we have 
 
\[
f_1(y)-f_0(x)=\int_0^1\dd tf_t(\gamma(t)) dt=\int_0^1\dot f_t(\gamma(t))+df_t(\dot\gamma(t)) dt. 
\]

By the Hamilton-Jacobi equation in Theorem \ref{char}, the above equation becomes 
\begin{equation}\label{char1}
f_1(y)-f_0(x)=\int_0^1-H(\gamma(t),(df_t)_{\gamma(t)})+df_t(\dot\gamma(t)) dt. 
\end{equation}

By the definition of the Hamiltonian $H$, the above equation gives 
\begin{equation}\label{char2}
f_1(y)-f_0(x)\leq \int_0^1L(\gamma(t),\dot\gamma(t))dt= c(x,y). 
\end{equation}

By Theorem \ref{char}, we have $\dot\varphi_t(x)=\partial_pH\Big|_{df_t}(\varphi_t(x))$. Therefore, if we let $\gamma(t)=\varphi_t(x)$, then (\ref{char1}) becomes 
\[
f_1(\varphi_1(x))-f_0(x)=\int_0^1-H(x,(df_t)_{\varphi_t(x)})+df_t(\partial_pH\Big|_{df_t}(\varphi_t(x))) dt. 
\]

Finally, by the definition of the Hamiltonian $H$ and Theorem \ref{char}, the above equation gives
\[
f_1(\varphi_1(x))-f_0(x)=\int_0^1L(\varphi_t(x),\dot\varphi_t(x)) dt=c(x,\varphi_1(x)). 
\]

If we integrate both sides with respect to $\mu$, then we have 
\begin{equation}\label{char3}
\int_Mc(x,\varphi_1(x))d\mu=\int_Mf_1d(\varphi_*\mu) -\int_Mf_0d\mu. 
\end{equation}

Therefore, this finishes the proof if we combine (\ref{char2}), (\ref{char3}), and the following standard theorem in the theory of optimal transportation (see \cite{Vi2} for a proof). 

\

\begin{thm}
Let $\pi_1,\pi_2:M\times M\to M$ be the projections onto the first and the second entry, respectively. Then the following holds:
\[
\inf\int_{M\times M} c(x,y)d\Pi=\sup\int_M f_1d\nu-\int_Mf_0d\mu,
\]
where the infimum on the left is taken over all measures $\Pi$ on the product space $M\times M$ satisfying $(\pi_1)_*\Pi=\mu$ and $(\pi_2)_*\Pi=\mu$, and the supremum on the right is taken over all pairs of continuous functions $(f_0,f_1)$ satisfying $f_1(y)-f_0(x)\leq c(x,y)$. 
\end{thm}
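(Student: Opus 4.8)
The plan is to prove the two inequalities separately; only one of them, the absence of a duality gap, is genuinely substantial, and it is an instance of convex (Fenchel--Rockafellar) duality. For the easy inequality $\sup\le\inf$, let $\Pi$ be any coupling of $\mu$ and $\nu$ (a measure on $M\times M$ whose first marginal is $\mu$ and whose second marginal is $\nu$) and let $(f_0,f_1)$ be continuous with $f_1(y)-f_0(x)\le c(x,y)$ everywhere. Integrating this pointwise inequality against $\Pi$ and using the marginal conditions gives
\[
\int_M f_1\,d\nu-\int_M f_0\,d\mu=\int_{M\times M}\bigl(f_1(y)-f_0(x)\bigr)\,d\Pi\le\int_{M\times M}c(x,y)\,d\Pi ,
\]
and taking the supremum over admissible pairs and the infimum over couplings yields $\sup\le\inf$. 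Both quantities are finite because $M$ is compact and the cost $c$ defined by (\ref{cost}) is continuous (the minimal action of a Tonelli Lagrangian on a compact manifold is continuous, indeed locally semiconcave), hence bounded.

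For the reverse inequality I would apply the Fenchel--Rockafellar duality theorem on the Banach space $E=C(M\times M)$, whose dual is the space of finite signed Radon measures on $M\times M$ by the Riesz representation theorem. Since adding a constant $N$ to $c$ changes both sides of the asserted identity by the same amount $N$ (couplings are unaffected, while replacing $f_0$ by $f_0-N$ sets up a bijection between admissible pairs for $c$ and for $c+N$), I may first assume $c\ge 1$. Define two convex functions on $E$: $\Theta(u)=0$ if $u(x,y)\ge -c(x,y)$ for all $(x,y)$ and $\Theta(u)=+\infty$ otherwise; and $\Xi(u)=\int_M f_0\,d\mu-\int_M f_1\,d\nu$ if $u(x,y)=f_0(x)-f_1(y)$ for some $f_0,f_1\in C(M)$, and $\Xi(u)=+\infty$ otherwise (this is well defined because the representation $u=f_0-f_1$ is unique up to an additive constant, which cancels since $\mu,\nu$ are probability measures). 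With these choices $\inf_E(\Theta+\Xi)=-\sup\bigl\{\int f_1\,d\nu-\int f_0\,d\mu\bigr\}$, the supremum being over admissible pairs. The qualification hypothesis holds: at $u_0\equiv 0$ one has $\Xi(u_0)=0<\infty$, while $\Theta$ vanishes identically on the ball $\{\norm{u}_\infty<1\}$ — there $u>-1\ge -c$ — so $\Theta$ is continuous at $u_0$.

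Fenchel--Rockafellar then yields $\inf_E(\Theta+\Xi)=\sup_{\pi\in E^*}\bigl(-\Theta^*(-\pi)-\Xi^*(\pi)\bigr)$, and it remains to identify the two Legendre transforms. Testing $\Xi^*(\pi)=\sup_u(\langle\pi,u\rangle-\Xi(u))$ against $u(x,y)=f_0(x)$ and against $u(x,y)=-f_1(y)$, with $f_0,f_1\in C(M)$ arbitrary, shows that $\Xi^*(\pi)=0$ if $\pi$ has first marginal $\mu$ and second marginal $\nu$, and $\Xi^*(\pi)=+\infty$ otherwise. For $\Theta$, writing $u=-c+w$ with $w\in C(M\times M)$, $w\ge 0$, one gets $\Theta^*(-\pi)=\int c\,d\pi+\sup_{w\ge 0}\bigl(-\langle\pi,w\rangle\bigr)$, which equals $\int c\,d\pi$ when $\pi\ge 0$ and $+\infty$ when $\pi$ has a nontrivial negative part; here the continuity of $c$ is used so that $-c\in E$ is an admissible competitor. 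Consequently the dual supremum runs only over nonnegative measures with marginals $\mu$ and $\nu$, i.e. over couplings $\Pi$, and equals $\sup_\Pi\bigl(-\int c\,d\Pi\bigr)=-\inf_\Pi\int c\,d\Pi$. Comparing with $\inf_E(\Theta+\Xi)=-\sup\{\int f_1\,d\nu-\int f_0\,d\mu\}$ gives the stated equality.

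I expect the main obstacle to be the careful evaluation of the two conjugate functionals $\Theta^*$ and $\Xi^*$: extracting the exact marginal constraints from $\Xi^*$ via the Riesz duality $C(M\times M)^*=\mathcal{M}(M\times M)$, and showing that the one-sided constraint $u\ge -c$ forces the optimal measure appearing in $\Theta^*$ to be nonnegative. The remaining ingredients — weak duality, the reduction to $c\ge 1$, and the verification of the continuity qualification — are routine given compactness of $M$ and continuity of $c$.
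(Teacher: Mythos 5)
Your proof is correct, and it is essentially the argument the paper itself relies on: the paper does not prove this theorem but cites Villani for it, and your Fenchel--Rockafellar derivation (easy inequality by integrating the constraint against a coupling, then conjugating the indicator functionals $\Theta$, $\Xi$ on $C(M\times M)$ with the Riesz identification of the dual and the continuity qualification after normalizing $c\geq 1$) is exactly the standard proof given there. Note only that you have silently, and correctly, read the second marginal condition as $(\pi_2)_*\Pi=\nu$ rather than the $\mu$ appearing in the statement, which is a typo in the paper.
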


\

\end{proof}

\bigskip

\section{The Riemannian case}\label{RiemannianS}

In the section, we specialize to the Riemannian case and give a proof of Theorem \ref{Riemannian}. In this case, we can use the homogeneity of the corresponding Hamiltonian to improve the result in Theorem \ref{main}. 

\

\begin{prop}\cite[Lemma 5.1]{AgGa}\label{homo}
Assume that the Hamiltonian $H$ is homogeneous of degree $\delta+1$ in the fibre variable. Let $\vec r$ be the Reeb field defined by $\vec r(\alpha)=\alpha^\ver$. Then $\vec r(\alpha)-t\delta\vec H(\alpha)$ is contained in $J_\alpha(t)$ for all $t$. In particular, the Hamiltonian vector field $\vec H$ is horizontal. 
\end{prop}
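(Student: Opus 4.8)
The plan is to exploit the scaling symmetry that fibrewise homogeneity induces on the Hamiltonian flow. Write $\sigma_\lambda\colon T^*M\to T^*M$ for the fibrewise dilation $\sigma_\lambda(x,p)=(x,\lambda p)$, so that $\vec r(\alpha)=\frac{d}{d\lambda}\big|_{\lambda=1}\sigma_\lambda(\alpha)$ is exactly the Reeb field. From $H\circ\sigma_\lambda=\lambda^{\delta+1}H$ one gets, in any cotangent chart, $\partial_pH\circ\sigma_\lambda=\lambda^{\delta}\,\partial_pH$ and $\partial_xH\circ\sigma_\lambda=\lambda^{\delta+1}\,\partial_xH$; combined with $d\sigma_{1/\lambda}(\partial_{x_i})=\partial_{x_i}$ and $d\sigma_{1/\lambda}(\partial_{p_i})=\lambda^{-1}\partial_{p_i}$ this shows that $\vec H$ scales covariantly, $(\sigma_{1/\lambda})_*\vec H=\lambda^{\delta}\vec H$. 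Since $\sigma_\lambda^{-1}\circ\phi_t\circ\sigma_\lambda$ is the flow of $(\sigma_{1/\lambda})_*\vec H=\lambda^{\delta}\vec H$, it equals $\phi_{\lambda^{\delta}t}$, i.e.
\[
\phi_t\circ\sigma_\lambda=\sigma_\lambda\circ\phi_{\lambda^{\delta}t}.
\]

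Next I would differentiate this intertwining relation in $\lambda$ at $\lambda=1$. The left side yields $d\phi_t(\vec r(\alpha))$; on the right, the chain rule applied to the reparametrisation $t\mapsto\lambda^{\delta}t$ (whose $\lambda$-derivative at $\lambda=1$ is $\delta t$) yields $\vec r(\phi_t(\alpha))+\delta t\,\vec H(\phi_t(\alpha))$. Hence $d\phi_t(\vec r(\alpha))=\vec r(\phi_t(\alpha))+\delta t\,\vec H(\phi_t(\alpha))$. Using the $\phi_t$-invariance $d\phi_t(\vec H(\alpha))=\vec H(\phi_t(\alpha))$ one concludes
\[
d\phi_t\big(\vec r(\alpha)-t\delta\,\vec H(\alpha)\big)=\vec r(\phi_t(\alpha))\in\ver_{\phi_t(\alpha)},
\]
and applying $d\phi_t^{-1}$ gives $\vec r(\alpha)-t\delta\,\vec H(\alpha)\in d\phi_t^{-1}(\ver_{\phi_t(\alpha)})=J_\alpha(t)$, which is the first claim.

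For the ``in particular'', fix a canonical frame $\{e^1(t),\dots,e^n(t),f^1(t),\dots,f^n(t)\}$ at $\alpha$ and expand the curve $\bar e(t):=\vec r(\alpha)-t\delta\,\vec H(\alpha)$, which lies in $J_\alpha(t)$, as $\bar e(t)=\sum_i c_i(t)e^i(t)$. Differentiating twice and substituting the structure equations $\dot e^i=-f^i$, $\dot f^i=R_\alpha(t)e^i$ of Proposition \ref{canonical}, the $f^i(t)$-component of $\ddot{\bar e}(t)$ works out to $-2\dot c_i(t)$; but $\bar e$ is affine in $t$, so $\ddot{\bar e}\equiv0$ and therefore $\dot c_i\equiv0$. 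Then $\dot{\bar e}(t)=-\sum_i c_i\,f^i(t)$ with constant $c_i$, i.e. $\delta\,\vec H(\alpha)=\sum_i c_i\,f^i(t)$ for all $t$; evaluating at $t=0$ and recalling $\hor_\alpha=\mathrm{span}\{f^1(0),\dots,f^n(0)\}$ gives $\vec H(\alpha)\in\hor_\alpha$ (here $\delta\neq0$; in the degree-two case relevant to the Riemannian setting $\delta=1$).

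I expect the only real obstacle to be bookkeeping: one must carry the time-reparametrisation $\lambda^{\delta}t$ through the $\lambda$-differentiation without error, since it is exactly there that the factor $\delta t$ — and hence the precise coefficient $t\delta$ in the statement — is produced, and one must verify the covariance identity $(\sigma_{1/\lambda})_*\vec H=\lambda^{\delta}\vec H$ by a short coordinate computation. Everything else is formal: invariance of $\vec H$ under its own flow, the definition of $J_\alpha(t)$, and the structure equations of a canonical frame.
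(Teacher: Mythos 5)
Your proof is correct. The paper itself gives no proof of this proposition --- it is quoted from \cite[Lemma 5.1]{AgGa} --- so there is nothing internal to compare against, but your argument is the natural one and it is fully consistent with how the paper uses the result: the identity $d\phi_t\bigl(\vec r(\alpha)-t\delta\vec H(\alpha)\bigr)=\vec r(\phi_t(\alpha))$ that you derive from the intertwining relation $\phi_t\circ\sigma_\lambda=\sigma_\lambda\circ\phi_{\lambda^\delta t}$ is exactly what the author invokes (with $\delta=1$) in equation (\ref{Rp2}), and your second step --- expanding the affine curve in the basis $e^1(t),\dots,e^n(t)$, differentiating twice, and reading off $\dot c_i=0$ from the $f^i$-component via the structure equations of Proposition \ref{canonical} --- is verbatim the computation carried out in the proof of Theorem \ref{Riemannian} ("$\ddot A_tE_t-2\dot A_tF_t-A_t\bar R_tE_t=0$, hence $\dot A_t=0$"). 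Two small points worth keeping: the caveat $\delta\neq 0$ you note for the "in particular" is harmless here, since under the paper's standing assumption of fibrewise strict convexity a fibrewise homogeneous Hamiltonian necessarily has $\delta+1>1$; and the canonical frame needed in the second step exists for the same reason (Proposition \ref{Darboux} requires $B^t_\alpha$ positive definite).
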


\

\begin{proof}[Proof of Theorem \ref{Riemannian}]
Let $\varphi$ and $\Phi$ as  in the proof of Theorem \ref{main}. Let $e^1(t),...,e^n(t),f^1(t),...,f^n(t)$ be a canonical frame at the point $\alpha=df_x$ of the cotangent bundle $T^*M$. We claim that $e^1(t)$ can be chosen to be $z(t):=\frac{1}{|I^{-1}(\alpha)|}\left(\vec r(\alpha)-t\vec H(\alpha)\right)$ if $\alpha\neq 0$. 

First, note that $z(t)$ has norm one with respect to the inner product $B_\alpha^t$. Indeed, by the definition of $B_\alpha^t$, we have 
\[
B_\alpha^t(z(t),z(t))=-\omega(z(t),\dot z(t))=\frac{\omega(\vec r(\alpha),\vec H(\alpha))}{|I^{-1}(\alpha)|^2}. 
\]

A calculation using local coordinates, we have  
\[
\omega(\vec r(\alpha),\vec H(\alpha))=|I^{-1}(\alpha)|^2
\]
and
\[
B_\alpha^t(z(t),z(t))=1. 
\]

By Theorem \ref{homo}, 
\[
z(t)=A_tE_t, 
\]
where $A_t$ is a $n\times 1$ matrix and $E_t=\left(e^1(t),...,e^n(t)\right)^T$. 

If we differentiate the above equation twice and note that $\ddot z(t)=0$, then we have 
\[
\ddot A_tE_t-2\dot A_tF_t-A_t\bar R_tE_t=0,  
\]
where $F_t=\left(f^1(t),...,f^n(t)\right)^T$ and $\bar R_t$ is defined as in (\ref{Rt}). 

It follows that $\dot A_t=0$ and $A_t$ is independent of $t$. Therefore, by Theorem \ref{canonical}, we can choose $e^1(t)$ to be $z(t)$ by applying an appropriate orthogonal transformation if $\alpha=df_x\neq 0$. 

For the rest of the proof, we assume that $e^1(t)=\frac{1}{|I^{-1}(\alpha)|}(\vec r(\alpha)-t\vec H(\alpha))$ and $f^1(t)=-\dot e^1(t)=\frac{1}{|I^{-1}(\alpha)|}\vec H(\alpha)$ whenever $\alpha\neq 0$. 

Let $\bar R^{ij}_t$ be the components of the matrix $\bar R_t$ defined as in Theorem \ref{main}. It follows that from the definition of the inner product $B_\alpha^t$ and (\ref{canonicalR}) that 
\[
\bar R^{ij}_t =B_\alpha^t(R_\alpha(t)(e^i(t)),e^j(t)) =-\omega_\alpha(d\phi_t^{-1}R^H_{\phi_t(\alpha)}d\phi_t(e^i(t)),f^j(t)).
\]

Since the symplectic form $\omega$ is preserved along the Hamiltonian flow $\phi_t$, we also have 
\[
\begin{split}
\bar R^{ij}_t
&=-\omega_{\phi_t(\alpha)}(R^H_{\phi_t(\alpha)}d\phi_t(e^i(t)),d\phi_t(f^j(t)))\\
&=B^0_{\phi_t(\alpha)}(R^H_{\phi_t(\alpha)}d\phi_t(e^i(t)),d\phi_t(e^j(t))).
\end{split}
\]

Let $v^i(t)$ be the tangent vectors defined along the geodesic $t\mapsto\pi\circ\phi_t$ by the vertical lift: $d\phi_t(e^i(t))=(Iv^i(t))^\ver$. Note that since the frame $e^1(t),...,e^n(t),f^(t),...,f^n(t)$ is a Darboux frame, $v^1(t),...,v^n(t)$ is orthonormal with respect to the Riemannian metric. It follows from Proposition \ref{canonicalbilinearform} and \ref{curvaturecharN} that 
\begin{equation}\label{Rp1}
\begin{split}
\bar R^{ij}_t
&=B^0_{\phi_t(\alpha)}(R^H_{\phi_t(\alpha)}(Iv^i(t))^\ver,(Iv^j(t))^\ver)\\
&=B^0_{\phi_t(\alpha)}((I\RC(I^{-1}(\phi_t(\alpha)),v^i(t))I^{-1}(\phi_t(\alpha)))^\ver
,(Iv^j(t))^\ver)\\
&=\left<\RC(I^{-1}(\phi_t(\alpha)),v^i(t))I^{-1}(\phi_t(\alpha)),v^j(t)\right>_{\pi(\phi_t(\alpha))}. 
\end{split}
\end{equation}

If we assume that $\nabla f(x)\neq 0$, then $e^1(t)=\frac{1}{|I^{-1}(\alpha)|}(\vec r(\alpha)-t\vec H(\alpha))$ and we have, by Proposition \ref{homo}, 
\begin{equation}\label{Rp2}
(Iv^1(t))^\ver=d\phi_t(e^1(t))=\frac{1}{|I^{-1}\alpha|}\vec r(\phi_t(\alpha))=\frac{1}{|I^{-1}\alpha|}\phi_t(\alpha)^\ver.
\end{equation}

Note that $I^{-1}\alpha=I^{-1}df=\nabla f$ and the Riemannian exponential map $\exp$ satisfies $\pi(\phi_t(Iv))=\exp(v)$. Therefore, it follows from (\ref{Rp1}) and (\ref{Rp2}) that 
\begin{equation}\label{Rp3}
\bar R^{ij}_t=|\nabla f(x)|^2\left<\RC(v^1(t),v^i(t))v^1(t),v^j(t)\right>_{\exp(t\nabla f(x))}. 
\end{equation}
Note that the equation in (\ref{Rp3}) holds also in the case $\nabla f(x)=0$. 

By the assumption of the theorem, the sectional curvature is bounded above by $k$. Therefore, the following 
\[
\left<\mathfrak R(v^1(t),\cdot)v^1(t),\cdot\right>
\] 
defines a bilinear form on the the orthogonal complement of $v^1(t)$ which is bounded above by $kI$. It follows from this and (\ref{Rp3}) that 

\begin{equation}\label{Rp4}
\bar R_t\leq \left(\begin{array}{cc}
           0 & 0 \\
           0 & k|\nabla f|^2I \\
         \end{array}
       \right). 
\end{equation}

Let $\mathcal S$ be the Hessian matrix defined by 
\[
\mathcal S_{ij}=\left<v^i(0),\hess\, f (v^j(0))\right>. 
\]
Note that $v^1(0)=\frac{\nabla f(x)}{|\nabla f(x)|}$ if $\nabla f(x)\neq 0$. 

As in the proof of Theorem \ref{main}, we want to show that the solution to the Riccati equation
\begin{equation}\label{ric1}
\dot S_t+S_t^2+\bar R_t=0, \quad S_0=\mathcal S
\end{equation}
is bounded below under the assumptions of the theorem. 

We compare the equation in (\ref{ric1}) with 
\begin{equation}\label{ric2}
\dot{\bar S}+\bar S^2+\left(\begin{array}{cc}
           0 & 0 \\
           0 & k|\nabla f|^2I \\
         \end{array}
       \right)=0, \quad S_0=\mathcal S. 
\end{equation}

By Theorem \ref{compare} and (\ref{Rp4}), we have $S_t\geq \bar S_t$. If we apply Theorem \ref{explicit}, the solution of the initial value problem in (\ref{ric2}) is given by
\[
\bar S_t=\Gamma_1(t)(\Gamma_2(t))^{-1}
\]
where $\lambda=\sqrt{|k|}|\nabla f|$, 
\[
\Gamma_1(t)=
\begin{cases}
\left(\begin{array}{cc}
           1 & 0 \\
           0 & \cosh(\lambda t)I \\
         \end{array}
       \right)\mathcal S+\left(\begin{array}{cc}
           0 & 0 \\
           0 & \lambda\sinh(\lambda t)I \\
         \end{array}
       \right) & k<0
\\ \mathcal S & k=0
\\\left(\begin{array}{cc}
           1 & 0 \\
           0 & \cos(\lambda t)I \\
         \end{array}
       \right)\mathcal S-\left(\begin{array}{cc}
           0 & 0 \\
           0 & \lambda\sin(\lambda t)I \\
         \end{array}
       \right) & k>0
       \end{cases}
\]
and 
\[
\Gamma_2(t)=
\begin{cases}
\left(\begin{array}{cc}
           t & 0 \\
           0 & \frac{\sinh(\lambda t)}{\lambda}I \\
         \end{array}
       \right)\mathcal S+\left(\begin{array}{cc}
           1 & 0 \\
           0 & \cosh(\lambda t)I \\
         \end{array}
       \right) & k<0
\\ t\mathcal S+I & k=0\\ 
\left(\begin{array}{cc}
           t & 0 \\
           0 & \frac{\sin(\lambda t)}{\lambda}I \\
         \end{array}
       \right)\mathcal S+\left(\begin{array}{cc}
           1 & 0 \\
           0 & \cos(\lambda t)I \\
         \end{array}
       \right) & k>0. 
\end{cases}
\]

Therefore, our assumption
\[
\mathcal S >
\begin{cases}
\left(\begin{array}{cc}
           -1 & 0 \\
           0 & -\lambda\coth(\lambda)I \\
         \end{array}
       \right) & k<0
\\ -I & k=0
\\\left(\begin{array}{cc}
           -1 & 0 \\
           0 & -\lambda\cot(\lambda)I \\
         \end{array}
       \right) & k>0,
\end{cases}
\]
implies that $S_t$ is bounded for all $t$ in $[0,1]$. 

The rest of the proof follows as in the proof of Theorem \ref{main}. 
\end{proof}

\bigskip

\end{document}